\documentclass[a4paper,12pt]{article}
\usepackage[utf8]{inputenc}
\usepackage[T1]{fontenc}
\usepackage{amsmath,amssymb,amsfonts,amsthm,amstext}
\usepackage{bm}
\usepackage[numbers]{natbib}
\usepackage[scr]{rsfso}
\usepackage{dsfont}
\usepackage{tikz}
\usepackage{nicefrac} 
\usepackage{bbm}

\oddsidemargin -3mm       
\textwidth 17cm
\topmargin -9mm           
\headsep 20pt              
\textheight 23cm
\scrollmode

\numberwithin{equation}{section}

\newtheorem{teo}{Theorem}
\newtheorem{prop}{Proposition}[section]
\newtheorem{lem}{Lemma}[section]
\newtheorem{rmk}{Remark}[section]

\renewcommand{\a}{\alpha}
\newcommand{\A}{\mathcal{A}}

\newcommand{\ba}{\bar{\a}}
\newcommand{\bma}{\bm{\a}}
\newcommand{\bmo}{\bm{1}}

\newcommand{\Bd}{\mathcal{B}}

\newcommand{\bs}{\beta_{\ast}}
\newcommand{\bt}{\beta}
\newcommand{\bsg}{\bar{\sigma}}
\newcommand{\cPhi}{\hat{\Phi}}
\newcommand{\D}{\mathcal{D}}
\renewcommand{\d}{\mathrm{dist}}
\newcommand{\dH}{\text{d}}
\newcommand{\dt}{\delta}
\newcommand{\Dt}{\Delta}
\newcommand{\E}{{\mathbbm E}}
\newcommand{\Es}{\mathcal{E}_N}
\newcommand{\e}{\eta}
\newcommand{\eps}{\epsilon}
\newcommand{\F}{\mathscr{F}}
\newcommand{\fka}{\mathfrak{a}}

\newcommand{\fkm}{\mathfrak{m}}
\newcommand{\fkM}{X}
\newcommand{\fkN}{Y}
\newcommand{\fkw}{\mathfrak{w}}
\newcommand{\G}{\Gamma}
\newcommand{\g}{\gamma}
\newcommand{\Gd}{\mathcal{G}}
\newcommand{\gr}{\mathring{\g}}
\renewcommand{\H}{\mathscr{H}}
\newcommand{\h}{\mathcal{H}}

\newcommand{\I}{\mathcal{I}^{\star}}
\renewcommand{\L}{\mathcal{L}}
\renewcommand{\l}{\ell}
\newcommand{\lb}{\bar{\l}}

\newcommand{\lbd}{\lambda}
\newcommand{\N}{{\mathbbm N}}

\renewcommand{\O}{\Omega}
\renewcommand{\o}{\omega}

\newcommand{\ox}{\overline{x}}
\newcommand{\p}{{\mathbbm P}}
\newcommand{\pas}{\p\mbox{-a.s.}}
\newcommand{\Pt}{\text{P}}
\newcommand{\R}{{\mathbbm R}}

\newcommand{\T}{\mathcal{T}}
\newcommand{\St}{S^{\star}}
\newcommand{\Sg}{\Sigma}
\newcommand{\s}{\sigma}
\renewcommand{\t}{\tau}

\renewcommand{\u}{\upsilon}
\newcommand{\ux}{\underline{x}}

\newcommand{\lc}{\left\lceil}
\newcommand{\rc}{\right\rceil}
\newcommand{\lf}{\left\lfloor}
\newcommand{\rf}{\right\rfloor}
\renewcommand{\lg}{\left\langle}
\newcommand{\rg}{\right\rangle}
\newcommand{\1}{{\mathbbm 1}}

\newcommand{\Nst}{K^{\star}}
\newcommand{\Vs}{N^{\star}}
\newcommand{\ps}{p^{\star}}

\newcommand{\qs}{q^{\star}}
\newcommand{\cs}{c_{\star}}
\newcommand{\rs}{\rho^{\star}}

\usepackage{lmodern}
\usepackage{titling}
\title{\textsc{\Large Convergence Time to Equilibrium of the Metropolis dynamics for the GREM}}

\author{\textsc{\large A. M. B. Nascimento}\thanks{Partially supported by CNPq grant 140762/2016-7}\thanksgap{1ex}\thanksmark{3} \and \textsc{\large L. R. Fontes}\thanks{Partially
supported by CNPq grant 311257/2014-3, and FAPESP grant 2017/10555-0}\thanksgap{1ex}\thanks{Instituto de Matemática e Estat\'istica, Universidade de São Paulo, Rua do Matão 1010, Cidade Universitária, 05508-090 São Paulo SP, Brasil. Emails: {amarcos, lrenato}@ime.usp.br}}

\date{}

\begin{document}

\maketitle

\vspace{-1cm}

\begin{abstract}
We study the convergence time to equilibrium of the Metropolis dynamics for the Generalized Random Energy Model with an arbitrary number of hierarchical levels, a finite and reversible continuous-time Markov process, in terms of the spectral gap of its transition probability matrix. This is done by deducing bounds to the inverse of the gap using a Poincaré inequality and a path technique. We also apply convex analysis tools to give the bounds in the most general case of the model.
\end{abstract}

\vspace{.5cm}
\noindent{\bf AMS 2010 Mathematics Subject Classification.} 60K35, 82B44, 82C44, 82D30
\vspace{.5cm}

\noindent{\bf Key words and phrases.} spin glasses, GREM, Metropolis dynamics, convergence to equilibrium, spectral gap, Poincaré inequality

\vspace{.5cm}




\section{Introduction and Main Result}
The Generalized Random Energy Model (GREM) is a mean field model for a spin glass in equilibrium, introduced in \cite{D}. Let us describe it. Consider a system with configuration space being $\Sg_N=\{-1,+1\}^N$, the discrete hypercube in $N$ dimensions, equipped with the following hierarchical structure in levels. Fix a number $k\in\N$, such that $k\leq N$, to indicate the number of levels. Let $\{p_j\}_{j=1}^k$ be a sequence of positive real numbers such that $\sum_{j=1}^k p_j=1$ and consider the following partition of the number $N$ into $k$ integers:
\begin{align}\label{Vj}
 N_j=\lf p_j N\rf,\, 1\leq j\leq k-1, \quad \text{and}\quad N_k=N-\sum_{j=1}^{k-1} N_j. 
\end{align}
With this notation, we represent $\Sg_N$ as the product
\begin{align}
 \Sg_N=\Sg_{N_1}\times\cdots\times\Sg_{N_k}
\end{align}
so that a spin configuration $\s\in\Sg_N$ is labeled as $\s=(\s_1,\ldots,\s_k)$ where $\s_j\in\Sg_{N_j}=\{-1,+1\}^{N_j}$ stands for the $j$-th level of $\s$. We denote with $\s^i$ and $\s_j^{i}$ generic spin coordinates of $\s$ and $\s_j$ respectively.

Now, we will define GREM's Hamiltonian on $\Sg_N$. 
Let
$$\H=\H_N=\left\{E^{(j)}_{\s_1\cdots\s_j}: \s_j\in \Sg_{N_j},\,1\leq j\leq k\right\}$$
be a family of independent (vectors of independent) Gaussian random variables of mean 0 and variance $N$.
%
We may view $\H$ 
as a {\it random environment} for the spin model to be defined next. 
Let $\{a_j\}_{j=1}^k$ be a collection of strictly positive real numbers such that $\sum_{j=1}^k a_j=1$, and denote by $\fka$ the vector $\fka=(\sqrt{a_j}: 1\leq j\leq k)$. The 
GREM Hamiltonian on $\Sg_N$ is then defined by
\begin{align}\label{H}
 \h(\s)=-\lg\fka,E_{\s}\rg=-\sum_{j=1}^k\sqrt{a_j}E_{\s_1\cdots\s_j}^{(j)},\, \s\in\Sg_N,
\end{align}
where for each $\s\in\Sg_N$, we denote by $E_{\s}$ the vector $E_{\s}=(E^{(j)}_{\s_1\cdots\s_j}:\,1\leq j\leq k)$, and $\lg \cdot,\cdot\rg$ is the usual inner product on $\R^k$. 
Then $\h=\{\h(\s),\,\s\in\Sg_N\}$ 
is a family of Gaussian random variables with marginal mean zero and variance $N$, and we remark that $\h(\s)$ and $\h(\t)$ are independent if and only if $\s,\t\in\Sg_N$ differ on the first level, i.e., if and only if $\s_1\neq \t_1$.

We denote by $\pi_N$ the 
Gibbs measure at inverse temperature $\bt>0$ associated to the GREM Hamiltonian $\h$ that assigns to each $\s\in\Sg_N$ the mass
\begin{align}\label{GBS}
 \pi_N(\s)=\pi_{k,N,\bt}(\s)=\frac{1}{Z_N}\exp\left(-\bt\h(\s)\right),
\end{align}
where $Z_N\equiv Z_{k,N}(\bt)$ denotes the usual normalizing factor. 
As usual, the function
\begin{align}
 F_N(\bt) =F_{k,N}(\bt)= -\frac{1}{N} \log Z_{k,N}(\bt)
\end{align}
indicates the finite volume free energy. Notice that all those quantities are random variables 
on $(\O,\F,\p)$.

\paragraph{Existence of the Free Energy.} An important equilibrium feature of the GREM that will be needed here is the existence of the free energy: for all $\bt>0$ the limit
\begin{align}\label{FrEn}
 F(\bt)\equiv\lim_{N\uparrow \infty} F_N(\bt)
\end{align}
exists $\p$-almost surely and coincides with $\lim_{N\uparrow \infty} \E(F_N(\bt))$ --- 
see \cite{Picco}, Theorem 2.1. Notice that $F(\bt)$ is a nonrandom function. 

For the sake of completeness, we recall here the explicit formula of $F(\bt)$. To get to that, we start by considering the $k$-dimensional Euclidean space equipped with the norm $\|\cdot\|^2=\lg \cdot,\cdot\rg$. Let us denote by $\Psi_k$ the following subset of $\R^k$,
\begin{align}
 \Psi_k=\left\{x\in\R^k:\sum_{i=1}^j x_i^2 \leq  \bs^2P_j,\,1\leq j\leq k \right\},
\end{align}
where 
$$P_j= \sum_{i=1}^j p_i\,\mbox{ and }\,\bs=\sqrt{2 \log 2}.$$ 
Now, set $J_0^*=0$ and recursively, define
\begin{align}
 J_{l}^*=\min\{J>J_{l-1}^*: B(J_{l-1}^*+1,J)\leq B(J_{l-1}^*+1,j),\forall j\geq J_{l-1}^*+1\},
\end{align}
where $B(i,j)=\bs\sqrt{\frac{p_i+\cdots+p_j}{a_i+\cdots+a_j}}$ for $1\leq i\leq j \leq k$. 
Let $l_k\in\{1,\ldots,k\}$ be such that $J_{l_k}^*=k$. Consider now the collection $(\bt_{l})_{l=0}^{l_k+1}$, where
\begin{align}\label{defb}
 \bt_{l}=B(J_{l-1}^*+1,J_{l}^*),\,1\leq l \leq l_k,
\end{align}
and $\bt_0= 0$ and $\bt_{l_k+1}= \infty$. From the definition of $(J_{l}^*)_{l=1}^{l_k}$, it is clear that 
$(\bt_{l})_{l=1}^{l_k}$ is strictly increasing in $l$. Suppose $\bt\in[\bt_{l},\bt_{l+1})$ for some $0\leq l\leq l_k$, and let $\fkw\equiv\fkw(\bt)\in\Psi_k$ be such that
\begin{align}
 \begin{aligned}
  \fkw_j&=\bt_i\sqrt{a_j},&&\text{if }j\in\{J_{i-1}^*+1,\ldots,J_i^*\}\mbox{ for some }i=1,\ldots,l;\\
 &=\bt\sqrt{a_j},&&\text{if }j\in\{J_{l}^*+1,\ldots,k\}.
 \end{aligned}
\end{align}
With this terminology, $\fkw$ is the point of $\Psi_k$ at minimal distance from
\begin{align}
 \fkm^*\equiv \fkm^*(\bt)=\bt\fka.
\end{align}
We finally have, for all $\bt>0$, that
\begin{align}\label{Fk}
 \begin{aligned}
  F(\bt)&=\frac12(\bs^2+\|\fkm^*\|^2-\|\fkm^*-\fkw\|^2)&&\\
  &=\bt\,\, \sum_{i=1}^{l}\,\,\bt_i \!\!\! \sum_{j=J_{i-1}^* + 1}^{J_i^*}\!\! a_j + 
  \frac12\sum_{j=J_{l}^*+1}^k (\bs^2 p_j + \bt^2 a_j),&&
 \end{aligned}
\end{align}
if $\bt_{l}\leq\bt <\bt_{l+1}$ for some $l=0,\ldots l_k$ --- see \cite{Picco}.
We remark that this function is once, but not twice, continuously differentiable with respect to $\bt$. From a physical point of view, this means that there exist (possibly multiple) third-order phase transitions for the GREM. 
Let us also point out that for $\bt\geq \bt_{l_k}$ 
there exists a unique point $\fkw^*\in\Psi_k$, independent of $\bt$, such that $\fkw=\fkw^*$ and
\begin{align}\label{IL5}
 F(\bt)=\lg\fkm^*,\fkw^*\rg=\max_{x\in \Psi_k} \lg\fkm^*,x\rg.
\end{align}
The latter identity is shown in Appendix, Lemma \ref{L5}.

\paragraph{Dynamics.}
Here, we consider a dynamics for the GREM, that is, we construct a continuous time Markov chain with state space $\Sg_N$, for which the Gibbs measure $\pi_N$ is invariant; indeed, the chain and the GREM are in detailed balance. 
In fact, we consider the Metropolis dynamics. Let us define it next. 
Let us consider the continuous-time Markov process $\{\omega_N(t):t\geq 0\}$, taking values in $\Sg_N$ and having transition probability matrix $\Pt$ with entries given by
\begin{align}\label{MTPL}
 \Pt(\s,\t)=\begin{cases}
             \frac{1}{N} \exp\left(-\bt\left[\h(\t)-\h(\s)\right]^+\right), & \text{if }\dH(\s,\t)=1;\\
             1-\sum_{\e\neq\s}\Pt(\s,\e),&\text{if }\s=\t;\\
             0,&\text{otherwise.}
            \end{cases}
\end{align}
where $\h$ is the GREM Hamiltonian defined in \eqref{H}; $\bt>0$ is the inverse of temperature parameter; $\dH(\cdot,\cdot)$ denotes the usual Hamming distance on $\Sg_N$ and $x^+=x\vee 0$, $x\in\R$. 
This process is reversible, and therefore, both stationary and ergodic, with respect to the Gibbs measure $\pi_N$.

Before discussing our results, let us recall the related results derived for the REM under Metropolis (which corresponds to the GREM with $k=1$).

The following result is implied by Theorem 1 in~\cite{fontes1998spectral}. Let $\lbd_N^{\text{\tiny REM}}$ be the spectral gap of the generator of the dynamics (or, equivalently, of the one-step transition probability matrix). Then for all $\bt>0$ we have that
%
%
\begin{align}\label{fikp}
 \lim_{N\uparrow \infty} -\frac{1}{N} \log \lbd_N^{\text{\tiny REM}} =\bs\bt \quad \pas
\end{align}
Indeed Theorem 1 in~\cite{fontes1998spectral} provides estimates for the errors of approximation that hold a.s.~for all large enough $N$, but we will not be concerned with those here.

In this paper we will derive upper bounds for the analogue in our dynamics of the quantity whose limit is taken in~(\ref{fikp}). These, as is well known, provide upper bounds for the time to reach equilibrium under the dynamics. Let us describe the relevant quantities more precisely.


Let $1=\mu_{N,0}>\mu_{N,1}\geq \cdots \geq \mu_{N,2^N}>-1$ denote the eigenvalues of the one-step transition probability matrix $\Pt$ whose entries are defined in \eqref{MTPL}; since $\Pt$ is reversible with respect to $\pi_N$, we have that 
\begin{align}
 \lbd_N\equiv \lbd_N(\bt)=1-\mu_{N,1}
\end{align}
is its spectral gap. Notice that, in the case of the REM, $\lbd_N=\lbd_N^{\text{\tiny REM}}$. The main result of this paper is the following.
\begin{teo}\label{Thm1}
 For all $\bt>0$,
 \begin{align}\label{IThm1}
  \limsup_{N \uparrow \infty} -\frac{1}{N} \log \lbd_N \leq \lg\fkm^*,\fkw^*\rg\quad \pas
 \end{align}
\end{teo}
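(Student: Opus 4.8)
).

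The plan is to prove a path bound $1/\lbd_N\le\rho_N$ and then to show $\limsup_N\frac1N\log\rho_N\le\lg\fkm^*,\fkw^*\rg$ $\p$-a.s. Since $\Pt$ is reversible with respect to $\pi_N$, fixing for each ordered pair $\s\neq\t$ a nearest-neighbour path $\g_{\s\t}$ in $\Sg_N$ gives, by the Diaconis--Stroock/Sinclair inequality,
\[
 \frac{1}{\lbd_N}\ \le\ \rho_N\ :=\ \max_{e=\{u,v\}}\frac{1}{Q(e)}\sum_{\g_{\s\t}\ni e}|\g_{\s\t}|\,\pi_N(\s)\,\pi_N(\t),\qquad Q(e)=\pi_N(u)\Pt(u,v)=\tfrac{1}{NZ_N}\,e^{\,\bt\min(\lg\fka,E_u\rg,\lg\fka,E_v\rg)},
\]
so everything reduces to controlling, edge by edge, the product of $1/Q(e)$ with the $\pi_N\times\pi_N$ flow the edge carries. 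I will use three facts from the equilibrium theory: $\tfrac1N\max_{\s}(-\h(\s))\to\Phi:=\max_{x\in\Psi_k}\lg\fka,x\rg=\lg\fka,\fkw^*\rg$ $\p$-a.s.\ (the GREM ground-state density, via the same Lagrangian analysis as in Lemma \ref{L5}, read at zero temperature), so that $\lg\fkm^*,\fkw^*\rg=\bt\Phi$ is exactly the exponent to be matched; $\tfrac1N\log Z_N\to F(\bt)$ with $F$ given by \eqref{Fk}; and the two convex-analytic inequalities $F(\bt)\ge\bt\Phi$ (equality iff $\bt\ge\bt_{l_k}$) and $F(\bt)-\log 2\le\bt\Phi$, both obtained from \eqref{Fk} by expanding $\|\fkm^*-\fkw\|^2=\|\fkm^*\|^2-2\lg\fkm^*,\fkw\rg+\|\fkw\|^2$ and using $\fkw\in\Psi_k$ (so $\|\fkw\|\le\bs$) and $\lg\fka,\fkw\rg\le\Phi$. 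Two consequences I record: $\pi_N(\s)=e^{\bt\lg\fka,E_\s\rg}/Z_N\le e^{(\bt\Phi-F(\bt)+o(1))N}$ for \emph{every} $\s$, and $Q(e)\ge e^{-(F(\bt)+o(1))N}$ whenever $\min(\lg\fka,E_u\rg,\lg\fka,E_v\rg)\ge-N^{2/3}$.

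The paths are built in three stages. Call $\s$ \emph{bulk} if $|\lg\fka,E_\s\rg|\le N^{2/3}$, \emph{deep} if $\lg\fka,E_\s\rg>N^{2/3}$, and \emph{high} if $\lg\fka,E_\s\rg<-N^{2/3}$. (i)~From a non-bulk $\s$, flip a single first-level coordinate to reach a bulk configuration $\nu(\s)$; such a neighbour exists simultaneously for all $\s$ and all large $N$, $\p$-a.s., since a first-level flip resamples $E^{(1)},\dots,E^{(k)}$ independently, a given first-level neighbour is thus bulk with probability $1-e^{-\Theta(N^{1/3})}$, the $N_1$ first-level neighbours are independent, and $2^N\bigl(e^{-\Theta(N^{1/3})}\bigr)^{N_1}$ is summable; put $\nu(\s)=\s$ when $\s$ is bulk. (ii)~Route $\nu(\s)\to\nu(\t)$ by the standard left-to-right coordinate-correction paths of the hypercube, detouring by $O(1)$ steps around any non-bulk configuration a straight path would traverse (the non-bulk set has density $e^{-\Theta(N^{1/3})}$ and short good detours exist a.s.). (iii)~The mirror escape into $\t$; all path lengths are then $\le\mathrm{poly}(N)$. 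Now each edge falls in one class. An edge with both endpoints non-bulk lies on no path. The escape edge $e=\{\s,\nu(\s)\}$ of a non-bulk $\s$ lies only on the paths $\g_{\s,\cdot}$ and $\g_{\cdot,\s}$ (the edge determines its non-bulk endpoint, so distinct non-bulk configurations use distinct escape edges), so it carries flow $\le 2\pi_N(\s)$; if $\s$ is deep the escape is uphill, $Q(e)\ge e^{-(F(\bt)+o(1))N}$, and the contribution to $\rho_N$ is $\le\mathrm{poly}(N)\,\pi_N(\s)/Q(e)\le e^{(\bt\Phi-F(\bt))N}\,e^{(F(\bt)+o(1))N}=e^{(\bt\Phi+o(1))N}$; if $\s$ is high the escape is downhill, $Q(e)=\pi_N(\s)/N$, and the contribution is merely $\le\mathrm{poly}(N)$. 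Finally a bulk edge $e$ has $Q(e)\ge e^{-(F(\bt)+o(1))N}$ and carries the pushed-forward flow $\sum_{b,b'}\tilde\pi(b)\tilde\pi(b')$ over bulk pairs whose route passes through $e$, where $\tilde\pi(b)=\sum_{\nu(\s)=b}\pi_N(\s)$ is a probability measure on bulk configurations fed by $b$ and its $\le N_1$ non-bulk first-level neighbours. Organising this flow by the total-depth classes --- the $\s$ with $\lg\fka,E_\s\rg\approx cN$ number $e^{(s(c)+o(1))N}$, have Gibbs mass $m_c=e^{(s(c)+\bt c-F(\bt))N}$, $\sum_c m_c=1$, and escape to $\le N_1 e^{(s(c)+o(1))N}$ targets scattered like a random set --- the left-to-right paths give congestion $\mathrm{poly}(N)\,2^{-N}$ per edge, so $\sum_{b,b'\,:\,e\in\g}\tilde\pi(b)\tilde\pi(b')\le\mathrm{poly}(N)\,2^{-N}\sum_{c,c'}m_c m_{c'}=\mathrm{poly}(N)\,2^{-N}$, and the bulk edge contributes $\le\mathrm{poly}(N)\,2^{-N}/Q(e)\le\mathrm{poly}(N)\,e^{(F(\bt)-\log 2)N}\le e^{(\bt\Phi+o(1))N}$.

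Combining the three classes yields $\frac1N\log\rho_N\le\bt\Phi+o(1)$ $\p$-a.s., hence $\limsup_N-\frac1N\log\lbd_N\le\bt\Phi=\lg\fkm^*,\fkw^*\rg$. All random-environment inputs --- the a.s.\ values of $\tfrac1N\max_\s(-\h(\s))$ and $\tfrac1N\log Z_N$, the a.s.\ counts $e^{(s(c)+o(1))N}$ and the scatter of configurations of prescribed depth, the a.s.\ existence of bulk escape neighbours and of short detours --- hold simultaneously for all large $N$ by Gaussian concentration and Borel--Cantelli, there being only exponentially many events at each scale against superexponentially small probabilities. I expect the main obstacle to be not a single step but the \emph{uniform} control, over the $\asymp N2^N$ edges and over $N$, of the bulk-routing flow: for $\bt<\bt_{l_k}$ the measure $\tilde\pi$ is genuinely spread out yet has heavy atoms at the escape targets of the exponentially many deep configurations, and one must show that these, together with the detours, never overload an edge beyond $\mathrm{poly}(N)\,2^{-N}$. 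A secondary difficulty --- the point at which the convex-analytic tools become indispensable --- is that for $k\ge2$ the quantities $\Phi$, $F(\bt)$ and $s(\cdot)$ are themselves defined through optimizations over $\Psi_k$, so the relations $F(\bt)-\log 2\le\bt\Phi$ and $F(\bt)=\sup_c\bigl(s(c)+\bt c\bigr)$ that close the flow budget must be extracted from that variational description for arbitrary $(p_j),(a_j)$.
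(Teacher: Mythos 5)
Your framework is the right one --- the Sinclair/Diaconis--Stroock Poincar\'e bound, a path construction that avoids bad vertices in path interiors, and the two convex-analytic inequalities $F(\bt)\ge\lg\fkm^*,\fkw^*\rg$ and $F(\bt)-\log 2\le\lg\fkm^*,\fkw^*\rg$, which indeed follow from \eqref{Fk} together with the projection characterization in Theorem~\ref{PBrezis} and are precisely the role played in the paper by Lemma~\ref{L5}. Your construction, however, differs from the paper's: you classify by a two-sided sublinear threshold $N^{2/3}$ into bulk/deep/high, escape every non-bulk vertex to a bulk one by a first-level flip, and route bulk-to-bulk by coordinatewise paths with detours; the paper classifies by a one-sided linear threshold $\kappa N$ (sent to $0$ after $N\to\infty$), uses the structural Lemma~\ref{Lindep} on independent paths and the intermediate-point device of Proposition~\ref{SetG} to guarantee good routing, and then estimates the full congestion $\fkM_N$ rather than a per-edge $2^{-N}$-type bound.

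The gap is exactly the step you flag as the main obstacle, but it is worse than a technical difficulty: the asserted bound $\sum_{\g_{\e\u}\ni e}\pi_N(\e)\pi_N(\u)\le\mathrm{poly}(N)\,2^{-N}$ on bulk edges is false. For $\bt\ge\bt_{l_k}$ the Gibbs measure puts total mass $1-o(1)$ on $\mathrm{poly}(N)$ near-ground states; their escape targets $b^\star$ then carry $\tilde\pi(b^\star)\asymp1/\mathrm{poly}(N)$, and the unique left-to-right route between two such targets deposits flow $\asymp1/\mathrm{poly}(N)^2\gg2^{-N}$ on each of its $\Theta(N)$ bulk edges. Even for $\bt<\bt_{l_k}$, a prefix slice $\{b:b_{\ge i}=u_{\ge i}\}$ of size $2^{i-1}$ with $2^{i-1}e^{s(c^\star)N}\ll2^N$ typically captures a single depth-$c^\star$ escape target and hence the full atom $e^{-s(c^\star)N}$, giving congestion up to $e^{-2s(c^\star)N}\gg2^{-N}$ whenever $s(c^\star)<\tfrac12\log2$. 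What is actually true, and what the paper proves, is that the congestion on good edges is $e^{(\lg\fkm^*,\fkw^*\rg-F(\bt)+o(1))N}$; this coincides with $2^{-N}$ only at $\bt=0$ and relaxes to $1/\mathrm{poly}(N)$ at $\bt_{l_k}$. Establishing it is the whole of Section~\ref{Sec-PropfkM1}: the coarse-graining into cells $\Dt_{\l_1,\ldots,\l_k}$, the recursive occupation counts controlled by Lemma~\ref{LemNst} and Proposition~\ref{C1}, and the convex maximization in Lemma~\ref{Lempsij} over projections onto $\Psi_k$. Your ``organising by depth classes, scattered like a random set'' is a heuristic for that argument, and ``short good detours exist a.s.'' likewise needs a construction of the kind Lemma~\ref{Lindep} provides. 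You are right that the conclusion survives because heavy-flow bulk edges also have $Q(e)\ge e^{-(F(\bt)+o(1))N}$ and $F(\bt)=\lg\fkm^*,\fkw^*\rg$ precisely in the heavy-atom regime, but turning this into a proof requires stratifying the bulk-edge flow by depth and doing the large-deviation work that the proposal omits.
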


Some remarks follow:
\begin{enumerate}
  \item First of all, notice that the bound in the right-hand side of \eqref{IThm1}, viewed as function of $\bt$, is the function that describes the free energy of the GREM for $\bt\geq\bt_{l_k}$. As expected, we get Proposition 4.2 in \cite{fontes1998spectral} as corollary of the Theorem \ref{Thm1} by taking $k=1$. We still remark that Theorem \ref{Thm1} holds for {\it all } $\bt>0$, for {\it all } $k\in\N$ and for {\it any} choice of parameters $\{a_j\}_{j=1}^k$ and $\{p_j\}_{j=1}^k$ satisfying $0<a_j,p_j<1$ and $\sum_{j=1}^k a_j=\sum_{j=1}^k p_j=1$.
  
  \item In view of Theorem \ref{Thm1}, using the following well known bound (see \cite{Diaconis} for a derivation): for all $\s\in\Sg_N$ and $t>0$,
  \begin{align}
   4\left\|\Pt_t(\s,\cdot)-\pi_N(\cdot)\right\|_{\text{\tiny var}}^2\leq \frac{1-\pi_N(\s)}{\pi_N(\s)}\exp\left(-2 \lbd_N t\right),
  \end{align}
  together with \eqref{FrEn} and Theorem 1.5(iii) of \cite{Bovier1}, one deduces that for any $t>\lg\fkm^*,\fkw^*\rg$,
  \begin{align}\label{IThm12}
    \lim_{N\uparrow \infty}\max_{\s}\left\| \Pt_{e^{Nt}}(\s,\cdot)-\pi_N(\cdot)\right\|_{\mathrm{var}}=0,\quad \p\mbox{-a.s.}
  \end{align}
  Here $\Pt_t(\s,\t)=e^{-t}\sum_{n=0}^{\infty}(t^n/ n!) \Pt^n(\s,\t)$ is the transition kernel of the dynamics.
  
  \item There is reason to believe that the bound \eqref{IThm1} is not sharp, based on the results of
  \cite{fontes2018asymptotic}, where 
  large volume limits for a hierarchical, simplified version of the present dynamics are derived for the 
  2 level GREM at low temperature (in the {\em cascading} phase), with time properly scaled. The limit dynamics are ergodic processes, and have the (infinite volume) Gibbs measure as equilibrium measure. The time scalings for those results are always below what is implied by \eqref{IThm1}, and this would indicate that the latter bound is not sharp (at least at low temperatures). 

On the other hand, under the dynamics of \cite{fontes2018asymptotic}, it may be proved that \eqref{IThm1} is the best bound one gets (to leading order) by using the Poincaré inequality employed in the present work (at all temperatures).

\item A direct analysis of the Metropolis dynamics for the GREM at time scales where one would expect to see an ergodic large volume limiting dynamics, as has been done in \cite{fontes2018asymptotic} for a simpler dynamics, has not been undertaken yet; even for the $k=1$ case of the REM, this has been done only at smaller time scales, where aging takes place instead ---
see \cite{vcerny2017aging,gayrard2016} --- and, indeed, spectral gap estimations are important elements in the derivations. 

See also \cite{Arous2018} for applications of spectral gap estimation on the study of a class of dynamics for a large family of mean field spin glasses.
 \end{enumerate} 

The rest of the paper is devoted to prove Theorem \ref{Thm1}. 
In Section \ref{Sec-Thm1}, we develop our bound to the inverse of the spectral gap, in terms of the canonical path approach by Jerrum and Sinclair.
This leads to the statement of two propositions which immediately lead to the proof of Theorem \ref{Thm1}. The proof of the first of 
the propositions is done in Section \ref{Sec-PropfkM1}, in several steps which take most of the remainder of the paper. Section \ref{Sec-PropfkM2} contains the similar, shortly presented proof of the second proposition, and an appendix is devoted to supporting results.

%

   
\section{Proof of the Theorem \ref{Thm1} -- Canonical set of paths}\label{Sec-Thm1} 
As mentioned above, the proof of Theorem \ref{Thm1} relies on a Poincaré inequality derived in~\cite{sinclair1991improved}.
To write this inequality in our context, the first step is to identify the Markovian process $\omega_N(t)$ with an undirected graph with vertex set $\Sg_N$. Naturally, we identify it with the $N$-dimensional hierarchical hypercube graph which we will also denote, with a little abuse, by $\Sg_N$. Let us denote $\Es=\{(\s,\t)\in \Sg_N^2: \dH(\s,\t)=1\}$ the edge set of $\Sg_N$. Now, let $\G_N=\{\g_{\e\u}:\e,\u\in\Sg_N\}$ be a complete set of self-avoiding canonical paths on $\Sg_N$, that is, for each $\e,\u\in\Sg_N$, there exists exactly one path $\g_{\e\u}$ in $\G_N$ connecting $\e$ and $\u$ using only valid transitions of the Markov chain $\omega_N(t)$, that is, only through edges of $\Es$. Denote by $\lb=\lb(\G_N)$ the maximum length of paths (i.e. number of edges) in $\G_N$. Then, from Theorem 5 in \cite{sinclair1991improved} we have
\begin{align}\label{cg0}
 \frac{1}{\lbd_N} \leq \varrho(\G_N)=\max_{e=(\s,\t)} \left\{\frac{\lb}{\pi_N(\s)\Pt(\s,\t)}\sum_{\g_{\e\u}\ni e} \pi_N(\e)\pi_N(\u)\right\}
\end{align}
where the maximum is over all edges $e=(\s,\t)\in \Es$ and the summation is over all pairs $(\e,\u)$ such that there exists a path $\g_{\e\u}$ in $\G_N$ that contains edge $e$. The expression $\varrho(\G_N)$ is 
called the {\it congestion} associated with the set of paths $\G_N$. Recall \eqref{GBS} and \eqref{MTPL}. Using them, it is easy to check that
\begin{align}\label{cg1}
 \varrho(\G_N) = \frac{\lb N}{Z_N}\max_{e=(\s,\t)} \left\{\exp\left(\bt[\h(\s)\vee \h(\t)]\right) \sum_{\g_{\e\u}\ni e} \exp\left(-\bt[\h(\e)+\h(\u)]\right)\right\}.
\end{align}
Notice that to apply efficiently inequality in \eqref{cg0} we need now to construct a suitable set of paths $\G_N$ that allows us to get a good upper bound to $\varrho(\G_N)$. By ``good'', we mean that on the limit, in the very spirit of \eqref{IThm1}, such bound coincides $\p$-almost surely with $\lg \fkm^*,\fkw^*\rg$.

When one tries to obtain a spectral gap estimate for the Metropolis dynamics of spin glass models using the canonical path technique, one of the first concerns is with edges $e=(\s,\t)\in\Es$ where 
$\h(\s)\vee \h(\t)$ is {\it large}. A natural attempt to control these {\it bad} edges is to avoid them as much as possible in the trajectories. The completeness of $\Gamma_N$ implies that they cannot be avoided as extreme edges of paths, but we may try to 
avoid them in the {\em interior} of paths; as we will see below, we succeed in doing that {\it with high probability}, with a set of paths that is amenable enough to subsequent analysis. This approach was already used in \cite{fontes1998spectral}.
%
%
%
Observe that with such set of paths, if $e=(\s,\t)\in\g_{\e\u}$ is a bad edge, then we have that either $\s=\e$ and $\t$ has the lowest energy, or $\s$ has lowest energy and $\t=\u$. Considering the first case --- the other one follows by symmetry ---, the term inside of the $\max$ sign in \eqref{cg1} can be estimated by
\begin{align}\label{cg10}
 \exp\left(\bt\h(\s)\right) \sum_{\g_{\s\u}\ni e} \exp\left(-\bt[\h(\s)+\h(\u)]\right) = \sum_{\u \neq \s} \exp\left(-\bt\h(\u)\right) \leq Z_N.
\end{align}
%

To construct our suitable set of paths $\G_N$, we need to introduce some notation. Let $\kappa>0$ be arbitrary. 
We say that a configuration $\s\in\Sigma_N$ is {\it good} if $$\h(\s)\leq \kappa N;$$ otherwise, we will call it {\it bad}.
We will call any set of configurations, in particular an edge of $\Es$, good if all the configurations in it are good; otherwise, we will call the set bad.
 Then the set $\Es$ can be written as the following disjoint union: $\Es=\Gd\cup\Bd$, where $\Gd$ and $\Bd$ denote the sets of good and bad edges, respectively. 
 
 For any path $\g=\{e_1,e_2,\ldots,e_n\}$ with $e_j\in\Es$, $j=1,\ldots,n$, let $\gr=\{e_2,e_3,\ldots,e_{n-1}\}$ denote the set of {\it interior edges} of $\g$. A path $\g$ with all interior edges good is called good; a set of paths with all elements good is also called good. At this point, it is clear that the set of paths that we aim to construct, a good one, will depend on the realization of the random environment $\H$ which implies that $\G_N$ will be a random set of paths.

One of the fundamental concepts we will need here is the notion of independent paths. Two paths $\g_1$ and $\g_2$ 
will be called {\it independent} if for all $\s\in\gr_1$ and $\t\in\gr_2$, the random variables $\h(\s)$ and $\h(\t)$ are independent; equivalently, if $\s_1\neq \t_1$. An extension of this concept for a finite family of paths in $\Sg_N$ can be done in an obvious way. At last, let us denote by $\dH_1(\cdot,\cdot)$, resp.~$\dH(\cdot,\cdot)$, the usual Hamming distance on $\Sg_{N_1}$, resp.~$\Sg_N$.

With these concepts in hands, we have the following lemma where we specify one condition under which there exist independent paths connecting 
configurations in $\Sigma_N$.
This will also motivate our subsequent definition of $\G_N$. 
\begin{lem}\label{Lindep}
 Let $\e$ and $\u$ be two configurations in $\Sg_N$. If $\dH_1(\e_1,\u_1)=n\geq 2$, then there exists a family containing $n$ independent paths connecting $\e$ to $\u$.
\end{lem}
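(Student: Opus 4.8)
The plan is to build the $n$ independent paths by combining two ideas: first, the hierarchical structure of $\Sg_N$ lets us separate the first-level coordinates from the rest; second, on the first level $\Sg_{N_1}$, which is an ordinary $N_1$-dimensional hypercube, two vertices at Hamming distance $n$ can be joined by $n$ internally vertex-disjoint paths (a standard fact about the hypercube, going back to the vertex-connectivity of $Q_d$ being $d$). The key observation is that if the first-level projections of the interiors of two paths are disjoint as subsets of $\Sg_{N_1}$, then the paths are independent in the sense defined above, since $\h(\s)$ and $\h(\t)$ are independent exactly when $\s_1\neq\t_1$.

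Concretely, I would first reduce to the case where $\e$ and $\u$ differ only in the first level, i.e.\ $\e_j=\u_j$ for $j\geq 2$: a general path from $\e$ to $\u$ can be taken to first flip the first-level coordinates (holding levels $2,\dots,k$ fixed) and then flip the remaining coordinates along a fixed route; the ``tail'' segment is common to all $n$ paths but it can be placed entirely at the two ends so that it lies outside the interior once we argue carefully — or, more cleanly, absorb it into the endpoints by noting its contribution to the interior involves configurations whose first level already equals $\u_1$, which we simply exclude from the $n$ disjoint first-level routes. Then, on $\Sg_{N_1}$, invoke the existence of $n$ internally disjoint paths $\gamma^{(1)}_1,\dots,\gamma^{(1)}_n$ from $\e_1$ to $\u_1$; explicitly one can use the classical construction where path $m$ first flips the $m$-th disagreeing coordinate, then flips the others in cyclic order, then unflips the $m$-th — this guarantees the interiors have pairwise disjoint vertex sets in $\Sg_{N_1}$. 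Lifting each $\gamma^{(1)}_m$ to $\Sg_N$ by appending the fixed tail coordinates gives paths $\gamma_1,\dots,\gamma_n$ in $\Sg_N$ connecting $\e$ to $\u$ through valid (Hamming-distance-one) transitions.

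Finally I would check independence: if $\s\in\mathring\gamma_m$ and $\t\in\mathring\gamma_{m'}$ with $m\neq m'$, then $\s_1$ lies in the interior of $\gamma^{(1)}_m$ and $\t_1$ in the interior of $\gamma^{(1)}_{m'}$, and by disjointness $\s_1\neq\t_1$, hence $\h(\s)$ and $\h(\t)$ are independent. The main obstacle, and the point needing the most care, is the bookkeeping at the boundary: ensuring that the shared ``tail'' portion (flipping coordinates in levels $\geq 2$) and, more importantly, the first and last edges of each $\gamma^{(1)}_m$ do not secretly force a shared interior vertex with first-level coordinate equal to $\e_1$ or $\u_1$. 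This is handled by the hypothesis $n\geq 2$: when $n=1$ the two endpoints' first levels are adjacent and any path through the interior would have to revisit $\e_1$ or $\u_1$, but for $n\geq 2$ the standard hypercube construction produces genuinely internally disjoint routes, so the only first-level values shared across different paths' interiors are none. I would also note that the length bound and self-avoidance are automatic from the explicit construction, though the lemma as stated only asserts existence and independence.
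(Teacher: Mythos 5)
Your high-level strategy is the same as the paper's — route through the first-level hypercube using cyclically shifted paths, and use ``same first level'' as the criterion for dependence — but the way you handle the coordinates in levels $\geq 2$ where $\e$ and $\u$ disagree (your ``tail'') has a real gap. If the tail is a common sequence of flips appended to the end of each of the $n$ lifted paths, then as soon as $\e$ and $\u$ disagree somewhere beyond level $1$, the configuration $(\u_1,\e_2,\ldots,\e_k)$ — first level completely switched, tail not yet begun — is an \emph{interior} vertex of \emph{every} one of your $n$ paths, so two distinct paths share an interior vertex with identical first level and cannot be independent in the required sense (they are not even vertex-disjoint). Splitting the common tail between the two ends, or observing that $\u_1$ is excluded from the interiors of the first-level routes, does not repair this: the offending vertex lives in the interior of the full lifted path, not of the first-level route. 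You flag this as the delicate point, but the proposal does not actually resolve it.

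The paper avoids the problem by taking the cyclic order over \emph{all} coordinates of $\Sg_N$ where $\e$ and $\u$ disagree, not only the first-level ones, with path $j$ beginning its cycle at the $j$-th first-level disagreement position $i_j$. The tail flips then occur at a stage of path $j$ where the first level already has $+1$ at $i_j,\ldots,i_n$ but still $-1$ at $i_1,\ldots,i_{j-1}$, a pattern that varies with $j$; consequently every interior vertex $\s$ of $\g_{\e\u}^{i_j}$ satisfies $\s_1^{i_j}=+1$ and $\s_1^{i_{j-1}}=-1$ (indices cyclic, with $i_0\equiv i_n$), which pins down $j$ and forces $\s_1\neq\t_1$ across distinct paths with no separate treatment of the tail. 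A secondary slip: your explicit recipe ``flip the $m$-th disagreeing coordinate, flip the others cyclically, then unflip the $m$-th'' does not produce a path ending at $\u_1$. The intended construction is a single cyclic pass through the $n$ disagreeing coordinates starting at the $m$-th, with no unflipping; you have likely conflated it with the detour construction that realizes the full vertex-connectivity $d$ of $Q_d$, which is not needed here since we only require $n$ paths.
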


\begin{proof}
Consider, for each pair of distinct vertices $\e,\u\in\Sg_N$, the set of paths
\begin{align}
 \G(\e,\u)=\{\g_{\e\u}^i:i=1,2,\ldots,N\},
\end{align}
where $\g_{\e\u}^i$ denotes the path from $\e$ to $\u$ defined as follows. Suppose $\dH(\e,\u)=r\geq n$; then let 
$1\leq \l_{m+1}<\cdots<\l_r<i\leq \l_1<\cdots<\l_m\leq N$ be the positions where $\e$ and $\u$ disagree, 
$m\in\{0,\ldots, r\}$. Let $\g_{\e\u}^i$ be the path starting at $\e$ and ending at $\u$ whose 
$j$-th edge, $1\leq j\leq r$, corresponds to flipping 
$\e_{\l_j}$ to $\u_{\l_j}$. 

For future reference, we set 
\begin{equation}\label{Gi}
\G^i=\{\g_{\e\u}^i:\e,\u\in\Sg_N\},\,i=1,2,\ldots,N. 
\end{equation}

We will now argue that $\G(\e,\u)$ is a family of paths that satisfies the required property.
Let $1\leq i_1<\cdots<i_n\leq N_1$ be the positions where $\e$ and $\u$ disagree on the first level, and consider the set of paths $\{\g_{\e\u}^{i_1},\ldots,\g_{\e\u}^{i_n}\}$. We claim that this set of paths is independent. Indeed, this is quite clear if the discrepancies between $\e$ and $\u$ are only in the first level. Otherwise, let us first notice that it is enough to consider the case where $\e_1$ and $\u_1\equiv+1$ differ in the $n$ first coordinates (where thus $\e_1\equiv-1$); now it is just a matter of noticing that any interior configuration $\s$ of $\g_{\e\u}^{i_j}$ is characterized by the condition that $\s_1^{i_{j-1}}=-1$ and  $\s_1^{i_j}=+1$ (in this paragraph, $i_0$ should be understood as $i_n$).

\end{proof}

With the help of this lemma, we can now construct the random set of paths that we will consider in \eqref{cg1}. Let $0<\eps <\nicefrac{1}{2}$ be arbitrary:
\begin{enumerate}
 \item For a given pair of distinct configurations $\e$ and $\u$ such that $\dH_1(\e_1,\u_1)\geq \eps N_1$, if there exists a good path in $\G(\e,\u)$, then we choose one such path, say the one with the smallest superscript, for $\G_N$; otherwise, we choose $\g_{\e\u}^1$;
 \item 
 If $\dH_1(\e_1,\u_1)<\eps N_1$, and there exists a good vertex $\s'\in\Sg_N$ such that $\dH_1(\e_1,\s_1')\geq\eps N_1$, $\eps N_1\leq\dH_1(\s_1',\u_1)=\dH(\s',\u)\leq 2\eps N_1$ and there exist good paths, one in $\G(\e,\s')$ and another in $\G(\s',\u)$, such that the concatenation of these two paths is a self-avoiding path with length less than $N$, then we choose this concatenation as the path from $\e$ to $\u$ in $\G_N$ (notice that this is a good path since $\s'$ is good); otherwise, we choose  $\g_{\e\u}^1$.\label{Cond2}
\end{enumerate}

It is immediate that $\G_N$ thus chosen 
is a complete set of self-avoiding paths, that is each pair $\e,\u\in\Sg_N$ is uniquely connected by a self-avoiding path $\g_{\e\u}\in\G_N$. Moreover, we may readily check that  
$\lb(\G_N)\leq N$, so we get the bound
\begin{align}\label{cg2}
 \varrho(\G_N)\leq \frac{N^2}{Z_N}\max_{e=(\s,\t)} \left\{\exp\left(\bt[\h(\s)\vee \h(\t)]\right) \sum_{\g_{\e\u}\ni e} \exp\left(-\bt[\h(\e)+\h(\u)]\right)\right\}.
\end{align}
The following is a key fact about $\G_N$.
\begin{prop}\label{SetG}
 For any $\kappa>0$ and any $0<\eps <\nicefrac{1}{2}$ the following holds: with $\p$-probability $1$ there exists an $N_0=N_0(\kappa,\eps)\in\N$ such that for all $N\geq N_0$ the set of paths $\G_N$ is good. 
\end{prop}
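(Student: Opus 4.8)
The plan is to prove that $\p(\G_N\text{ is not good})$ is summable in $N$, and then invoke the Borel--Cantelli lemma. The starting point is a reduction: $\G_N$ is good iff, for every ordered pair $(\e,\u)\in\Sg_N^2$, the path $\g_{\e\u}\in\G_N$ is good; and $\g_{\e\u}$ is automatically good whenever the construction selects it through the ``if'' alternative of the corresponding step (rather than through the fallback $\g_{\e\u}^1$), since in that case $\g_{\e\u}$ is either a good member of some $\G(\cdot,\cdot)$ or a good concatenation of two such. Hence $\{\G_N\text{ is not good}\}$ is contained in the union, over the at most $4^N$ pairs, of the events that the relevant ``if'' condition fails, and it suffices to bound the probability of each. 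Two quantitative inputs do the job, \emph{for every} $\kappa>0$: (i) since $\h(\s)$ is Gaussian with mean $0$ and variance $N$, $\p(\s\text{ is bad})=\p(\h(\s)>\kappa N)\le e^{-\kappa^2N/2}$, so a union bound over its at most $N$ interior configurations shows that any fixed self-avoiding path of length at most $N$ fails to be good with probability at most $Ne^{-\kappa^2N/2}$; and (ii) if $\g_1,\dots,\g_m$ are (pairwise) independent paths, then the events $\{\g_i\text{ is not good}\}$ are mutually independent --- by the proof of Lemma \ref{Lindep} the interior configurations of distinct $\g_i$ have pairwise distinct first-level components, so the associated blocks of the environment $\H$ are jointly independent --- whence $\p(\text{no }\g_i\text{ is good})\le(Ne^{-\kappa^2N/2})^m$; likewise, if $\s^{(1)},\dots,\s^{(M)}\in\Sg_N$ have pairwise distinct first-level components then $\p(\text{all }\s^{(j)}\text{ are bad})\le e^{-M\kappa^2N/2}$. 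The gains of order $m$, resp.\ $M$, in these exponents are exactly what will absorb the crude $4^N$ (or $8^N$) pair counts below, no matter how small $\kappa$ is.

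Consider first a pair with $\dH_1(\e_1,\u_1)=n\ge\eps N_1$. By Lemma \ref{Lindep} there are $n\ge\eps N_1$ independent paths inside $\G(\e,\u)$, so $\G(\e,\u)$ contains no good path with probability at most $(Ne^{-\kappa^2N/2})^{\lceil\eps N_1\rceil}$. Summing over the at most $4^N$ such pairs, and recalling $N_1=\lfloor p_1N\rfloor$, gives a bound of order $\exp\big(-\tfrac12\eps p_1\kappa^2N^2+O(N\log N)\big)$, which is summable in $N$.

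The case $\dH_1(\e_1,\u_1)<\eps N_1$ is where the work lies. One first checks, by an elementary combinatorial argument that uses $\eps<\nicefrac{1}{2}$, that for all large $N$ there is a set $\mathcal S(\e,\u)\subseteq\Sg_N$ of at least $e^{c_\eps N}$ configurations $\s'$ (with $c_\eps>0$), having pairwise distinct first-level components, each agreeing with $\u$ off the first level, and with $\dH_1(\e_1,\s'_1)=\dH_1(\s'_1,\u_1)=\lceil\eps N_1\rceil\in[\eps N_1,2\eps N_1]$ (up to a correction of $1$ for parity); for such an $\s'$ one automatically has $\dH(\e,\s')+\dH(\s',\u)=\big(\dH(\e,\u)-\dH_1(\e_1,\u_1)\big)+2\lceil\eps N_1\rceil\le(N-N_1)+2\lceil\eps N_1\rceil<N$, and Lemma \ref{Lindep} provides at least $\eps N_1$ independent paths in each of $\G(\e,\s')$ and $\G(\s',\u)$, among which (a short case inspection of the $\g_{\e\s'}^i$ --- e.g.\ choosing one whose last flip lies on a coordinate where $\e$ and $\u$ disagree) a pair can be picked whose concatenation is self-avoiding. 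Consequently, if for some $\s'\in\mathcal S(\e,\u)$ the configuration $\s'$ is good and both $\G(\e,\s')$ and $\G(\s',\u)$ contain a good path, then the ``if'' condition holds for $(\e,\u)$. Denoting by $\mathcal F(\e,\u)$ the complementary (failure) event and using the inclusion
\[
 \mathcal F(\e,\u)\subseteq\Big(\bigcap_{\s'\in\mathcal S(\e,\u)}\{\s'\text{ is bad}\}\Big)\cup\bigcup_{\s'\in\mathcal S(\e,\u)}\Big(\{\G(\e,\s')\text{ has no good path}\}\cup\{\G(\s',\u)\text{ has no good path}\}\Big),
\]
the two inputs above yield $\p(\mathcal F(\e,\u))\le e^{-|\mathcal S(\e,\u)|\,\kappa^2N/2}+2\,|\mathcal S(\e,\u)|\,(Ne^{-\kappa^2N/2})^{\lceil\eps N_1\rceil}$; summing over the at most $4^N$ such pairs and using $e^{c_\eps N}\le|\mathcal S(\e,\u)|\le2^N$ gives once more a bound summable in $N$.

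Adding the two cases, $\sum_N\p(\G_N\text{ is not good})<\infty$, and Borel--Cantelli produces an a.s.\ finite $N_0=N_0(\kappa,\eps)$ such that $\G_N$ is good for every $N\ge N_0$. The main obstacle is the regime $\dH_1(\e_1,\u_1)<\eps N_1$: one must manufacture a large enough supply of admissible waypoints $\s'$ meeting \emph{simultaneously} all of the deterministic requirements of the construction (the two distance constraints, the length of the concatenation staying below $N$, and the self-avoidance of the concatenation --- this is where $\eps<\nicefrac{1}{2}$ enters), and then realize that, although the events $\{\G(\e,\s')\text{ has a good path}\}$ for distinct $\s'$ need not be independent, a plain union bound over them is still enough, each such probability being already super-exponentially small.
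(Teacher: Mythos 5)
Your proof mirrors the paper's overall plan — Borel--Cantelli, Lemma \ref{Lindep} plus independence for pairs at first-level distance $\geq\eps N_1$, a waypoint construction for nearer pairs — but the waypoint construction you use for the case $\dH_1(\e_1,\u_1)<\eps N_1$ is genuinely different from the paper's, and it is precisely there that a gap opens up.

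You pick a \emph{single} waypoint $\s'$ symmetrically placed at first-level distance $\lceil\eps N_1\rceil$ from both $\e_1$ and $\u_1$, and then bound the failure probability via
\begin{align*}
 \mathcal F(\e,\u)\subseteq\Big(\bigcap_{\s'}\{\s'\text{ bad}\}\Big)\cup\bigcup_{\s'}\Big(\{\G(\e,\s')\text{ has no good path}\}\cup\{\G(\s',\u)\text{ has no good path}\}\Big).
\end{align*}
This inclusion does not hold. The ``if'' clause in the construction of $\G_N$ requires a good pair whose \emph{concatenation is self-avoiding}. With your choice of $\s'$, the two first-level disagreement sets $A=\{i:\e_1^i\neq\s'^i_1\}$ and $B=\{i:\s'^i_1\neq\u_1^i\}$ necessarily overlap (one computes $|A\cap B|=\lceil\eps N_1\rceil-\tfrac12\dH_1(\e_1,\u_1)>0$), so a generic pair $(\g_{\e\s'}^i,\g_{\s'\u}^{i'})$ revisits a vertex: the concatenated walk flips every coordinate of $A\cap B$ twice. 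You are aware of this and propose to fix it by choosing $\g_{\e\s'}^i$ so that its last flip lies outside $A\cap B$, but this is a property of a \emph{specific} path, not of the randomly found good ones. There may be as few as one admissible superscript $i$ (when $|A\setminus B|=\tfrac12\dH_1(\e_1,\u_1)$ is small), so you cannot use independence to push the probability of that one path being good to $1-e^{-cN\cdot(\text{growing})}$; and the paths $\g_{\e\s'}$ for distinct $\s'$ all emanate from $\e$ and overlap, so they are not mutually independent either. Replacing the events in your union by $\{$the designated path is bad$\}$ and summing over $|\mathcal S(\e,\u)|\approx 2^{cN}$ waypoints costs a factor $2^{cN}$ against only $Ne^{-\kappa^2 N/2}$, which is not summable once $\kappa$ is small. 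This is why the argument as written does not close.

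The paper's construction is engineered to avoid exactly this. There are \emph{two} waypoints $\o,\o'$: $\o$ is chosen in $\Sg_N^{\e,\u}$ so that $\o_1$ agrees with $\e_1$ on $\D_1^{\e,\u}$ and differs from $\e_1$ on $\lceil\eps N_1\rceil$ positions \emph{outside} $\D_1^{\e,\u}$, and $\o'$ is obtained from $\o$ by flipping exactly $\D_1^{\e,\u}$. Then the coordinates touched by the outer segments ($A$, plus off-first-level disagreements) are disjoint from the coordinates $\D_1^{\e,\u}$ touched by the middle segment $\g_{\o\o'}$, so the middle segment cleanly separates the two outer segments in coordinate space and the three-way concatenation is self-avoiding (away from degenerate sub-cases). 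Moreover, the linking paths $\g_{\s\s'}$ for distinct $\s\in\Sg_N^{\e,\u}$ \emph{are} mutually independent, because all interior vertices of $\g_{\s\s'}$ retain the distinct first-level signature of $\s$ on $\{1,\dots,N_1\}\setminus\D_1^{\e,\u}$, which the path never touches; so the Borel--Cantelli bookkeeping goes through with a super-exponential exponent $c_\kappa N(2\eps)^{-\eps N_1}$. Your one-waypoint variant loses both features at once. To make your route work you would need to exhibit, for at least one admissible $\s'$, a whole \emph{independent family} of concatenations that are self-avoiding by construction — which is in effect what the paper's $\Sg_N^{\e,\u}$ and $\g_{\s\s'}$ do.

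Your treatment of the easy case $\dH_1(\e_1,\u_1)\geq\eps N_1$ is fine and agrees with the paper's (which defers to Proposition 4.1 of the REM reference), and your tail bound $\p(\h(\s)>\kappa N)\leq e^{-\kappa^2N/2}$ plus independence gives the same super-exponential decay that drives the argument in that regime.
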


\begin{proof}
 For pairs of vertices $\e,\u\in\Sg_N$ such that $\dH_1(\e_1,\u_1)\geq \eps N_1$, the $\p$-almost sure existence of good paths connecting them in $\G_N$ is proved arguing as Proposition 4.1 in \cite{fontes1998spectral} using the help of Lemma \ref{Lindep}.
 
 For pairs of vertices $\e,\u\in\Sg_N$ such that $\dH_1(\e_1,\u_1)<\eps N_1$, let us first denote by $\D_1^{\e,\u}=\{i: \e_1^i\neq\u_1^i\}$ the set of positions where $\e$ and $\u$ differ on the first level and also introduce the set
 \begin{align}\label{SetSgNeu}
  \Sg_N^{\e,\u}=\{\s\in\Sg_N:\s_1|_{\D_1^{\e,\u}}=\e_1|_{\D_1^{\e,\u}},\dH_1(\s_1,\e_1)=\lc \eps N_1 \rc \text{ and }\s_j=\u_j,j=2,\ldots,k\}
 \end{align}
 where the condition ``$\s_1|_{\D_1^{\e,\u}}=\e_1|_{\D_1^{\e,\u}}$'' is not present if $\D_1^{\e,\u}=\varnothing$. Here, $\s_1|_D=(\s^i)_{i\in D}$ is just the restriction of $\s_1$ to set $D\subseteq\{1,\ldots,N_1\}$.
 %
 We may readily check that $\dH_1(\e_1,\o_1)\geq\eps N_1$ and $\eps N_1\leq\dH_1(\o_1,\u_1)=\dH(\o,\u)\leq 2\eps N_1$ for all $\o\in\Sg_N^{\e,\u}$. 
%
  
  For $\s\in\Sg_N^{\e,\u}$, let $\g_{\s\s'}$  stands for the path starting at the vertex $\s$, constructed by flipping the sites whose positions belong to $\D_1^{\e,\u}$, in increasing order of coordinate. In case $\D_1^{\e,\u}=\varnothing$, we assume that $\s=\s'$ and $\g_{\s\s'}=\{\s\}$. By Lemma \ref{lemSg} below, there are at least $(2\eps)^{-\eps N_1}$ such paths, which are independent by construction. Thus, since there exists a constant $c_{\kappa}>0$ such that the probability of all visited vertices for a given such path $\g_{\s\s'}$ to be bad can be bounded by $e^{-c_{\kappa}N}$ when $N$ is large enough, for any $\kappa>0$ and $0<\eps<\nicefrac{1}{2}$, we can found $N'=N'(\kappa,\eps)\in\N$ such that for all $N\geq N'$,
  \begin{align}
   \p\left[\bigcap_{(\e,\u)} \bigcap_{\s\in\Sg_N^{\e,\u}}\{\g_{\s\s'}\text{ is bad}\} \right]\leq \sum_{N\geq N'} 4^N e^{-c_{\kappa}N(2\eps)^{-\eps N_1}}<\infty.
  \end{align}
  It then follows from the Borel-Cantelli Lemma that, for any $\kappa>0$ and $0<\eps<\nicefrac{1}{2}$, with $\p$-probability $1$, for all $N$ sufficiently large there exists at least one vertex, say $\o\in\Sg_N^{\e,\u}$, and its corresponding path, say $\g_{\o\o'}$, which is good. 
  By construction we have that 
  $\e,\o$ are more than distance $\eps N_1$ apart, 
  and so are $\o',\u$; as before, for any $\kappa>0$ and any $0<\eps<\nicefrac{1}{2}$, we can $\pas$ find good paths $\g_{\e\o}$ and $\g_{\o'\u}$ for all $N$ large enough. The conclusion of this case now follows by concatenating the (good) paths $\g_{\e\o},\g_{\o\o'}$ and $\g_{\o'\u}$, to get the path from $\e$ to $\u$ in $\G_N$. 
\end{proof}

\begin{lem}\label{lemSg}
	For any $0< \eps <\nicefrac{1}{2}$ and any $\e,\u\in\Sg_N$ such that $\dH_1(\e_1,\u_1)<\eps N_1$, 
	let $\Sg_N^{\e,\u}$ be as in~(\ref{SetSgNeu}). Then
	\begin{align}
	|\Sg_N^{\e,\u}| \geq (2\eps)^{-\eps N_1}.
	\end{align}
\end{lem}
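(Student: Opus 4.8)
The plan is to evaluate $|\Sg_N^{\e,\u}|$ exactly as a binomial coefficient and then apply a standard lower bound for binomial coefficients. Write $n:=\dH_1(\e_1,\u_1)=|\D_1^{\e,\u}|$, so that $n<\eps N_1$ by hypothesis, and put $m:=\lc\eps N_1\rc$. A configuration $\s$ belongs to $\Sg_N^{\e,\u}$ exactly when: $\s_j=\u_j$ for every $j=2,\dots,k$, so levels $2,\dots,k$ carry no freedom; $\s_1$ agrees with $\e_1$ on the $n$ coordinates of $\D_1^{\e,\u}$; and $\s_1$ differs from $\e_1$ in exactly $m$ coordinates overall. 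Since the coordinates in $\D_1^{\e,\u}$ are forced to agree, all $m$ discrepancies with $\e_1$ must fall among the $N_1-n$ coordinates in $\{1,\dots,N_1\}\setminus\D_1^{\e,\u}$, and conversely each choice of $m$ of those coordinates to flip (relative to $\e_1$) yields exactly one element of $\Sg_N^{\e,\u}$, distinct choices giving distinct configurations. Hence $|\Sg_N^{\e,\u}|=\binom{N_1-n}{m}$.

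It remains to bound this below by $(2\eps)^{-\eps N_1}$. Since $n\in\N\cup\{0\}$ and $n<\eps N_1\le m$, we have $n\le m-1$, so $N_1-n\ge N_1-m+1\ge m$ for all $N$ large enough (this uses only $\eps<\nicefrac{1}{2}$), and in particular $\Sg_N^{\e,\u}\neq\varnothing$. Applying the elementary inequality $\binom{a}{b}\ge (a/b)^b$, valid for integers $1\le b\le a$, gives $\binom{N_1-n}{m}\ge\left(\tfrac{N_1-n}{m}\right)^{m}$. Now $\tfrac{N_1-n}{m}\ge\tfrac{N_1-m+1}{m}=\tfrac{N_1+1}{m}-1\ge\tfrac{N_1+1}{\eps N_1+1}-1$, using $m\le\eps N_1+1$; the last expression converges to $\tfrac1\eps-1=\tfrac{1-\eps}{\eps}$ as $N\to\infty$, and $\tfrac{1-\eps}{\eps}>\tfrac{1}{2\eps}$ precisely because $\eps<\nicefrac{1}{2}$. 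Hence for all $N$ large enough $\tfrac{N_1-n}{m}\ge\tfrac{1}{2\eps}$, and since $\tfrac{1}{2\eps}>1$ and $m\ge\eps N_1$,
\[
 |\Sg_N^{\e,\u}|=\binom{N_1-n}{m}\ge\left(\tfrac{1}{2\eps}\right)^{m}\ge\left(\tfrac{1}{2\eps}\right)^{\eps N_1}=(2\eps)^{-\eps N_1},
\]
which is the claim.

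There is no substantive difficulty: the argument reduces to the counting identity $|\Sg_N^{\e,\u}|=\binom{N_1-n}{m}$ followed by the textbook binomial estimate. The only delicate point is the rounding $m=\lc\eps N_1\rc$: when $\eps N_1\notin\Z$ the exponent $m$ slightly exceeds $\eps N_1$, and then the step $\tfrac{N_1-n}{m}\ge\tfrac{1}{2\eps}$ — as well as nonemptiness of $\Sg_N^{\e,\u}$ — is only guaranteed once $N_1$ passes a threshold of order $\big(\eps(1-2\eps)\big)^{-1}$, so for a few small values of $N$ the displayed bound can genuinely fail (for instance $N_1=1$, $\eps=\nicefrac14$ gives $|\Sg_N^{\e,\u}|=1<2^{1/4}$). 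This is immaterial for the paper, since Lemma~\ref{lemSg} is invoked only inside the proof of Proposition~\ref{SetG}, whose conclusion is itself needed only for all $N$ sufficiently large (through the Borel--Cantelli argument there); thus the statement is to be understood, and is only used, for $N$ large. (When $\eps N_1\in\Z$ one has $m=\eps N_1$ and the computation above works for every $N$, since then it only requires $\eps\le\nicefrac12+\nicefrac1{N_1}$.)
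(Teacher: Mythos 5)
Your proposal is correct and takes essentially the same approach as the paper: count $\Sg_N^{\e,\u}$ via a binomial coefficient (you get the exact value $\binom{N_1-n}{m}$, the paper uses the lower bound $\binom{N_1-\lf\eps N_1\rf}{\lc\eps N_1\rc}$, which amounts to the same thing since $n\leq\lf\eps N_1\rf$), apply $\binom{a}{b}\geq(a/b)^b$, and note that the resulting ratio exceeds $1/(2\eps)$ once $N$ is large because $\eps<\nicefrac12$. Both you and the paper make explicit that the stated inequality only holds for $N$ sufficiently large, which is all that is needed downstream.
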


\begin{proof}
	We have that
	\begin{align}
	|\Sg_N^{\e,\u}|\geq \binom{N_1-\lf \eps N_1 \rf}{\lc \eps N_1 \rc}\geq \left(\frac{1-\eps}{\eps+\frac{1}{N_1}}\right)^{\eps N_1}
	\end{align}
	where the last inequality follows from the fact that $\binom{n}{m} \geq (\nicefrac{n}{m})^m$, $n\geq m\geq 1$, and standard bounds for $\lf \cdot \rf$ and $\lc \cdot \rc$. Now, since $N_1 \uparrow \infty$ as $N \uparrow \infty$, for any $0<\eps<\nicefrac{1}{2}$, we have that $N_1^{-1}\leq \eps-2\eps^2$ for any $N$ sufficiently large. This is enough to get the statement of the lemma.
\end{proof}

Having constructed the set of paths $\G_N$, we can now proceed with the spectral gap estimate. From now on we assume that, for all $\kappa>0$ and all $0<\eps <\nicefrac{1}{2}$, $\pas$ for all large enough $N$, $\G_N$ is good. Recalling that $\Es=\Gd\cup\Bd$, where $\Gd$ and $\Bd$ denote the sets of good and bad edges respectively, we can write
\begin{align}\label{cg20}
 \varrho(\G_N)\leq \frac{N^2}{Z_N}(\fkM_N^{\Gd}\vee \fkM_N^{\Bd}),
\end{align}
where $\fkM_N^{\Gd}$, respectively $\fkM_N^{\Bd}$, is as the maximum term in \eqref{cg2} but with the $\max$ sign restrict to edges in $\Gd$, respectively $\Bd$. From \eqref{cg10}, it follows immediately that $\fkM_N^{\Bd} \leq Z_N$ and, by Proposition \ref{SetG}, one readily concludes that  $\fkM_N^{\Gd} \leq \exp\left(\kappa \bt N\right) \fkM_N$, where
\begin{align}
 \fkM_N=\max_{e\in\Gd}\left\{\sum_{\g_{\e\u}\ni e} \exp\left(-\bt[\h(\e)+\h(\u)]\right)\right\}.
\end{align}
Using these last bounds in \eqref{cg20}, $\varrho(\G_N)$ can be estimated by 
\begin{align}\label{cg200}
 \varrho(\G_N)\leq N^2 \vee \left( N^2Z_N^{-1} \exp\left(\kappa \bt N\right) \fkM_N\right), \quad \pas
\end{align}
for all large enough $N$.

Let now
\begin{align}
 \fkM_N^{(1)}&=\max_{e\in\Gd}\left\{\sum_{\g_{\e\u}\ni e} \exp\left(-\bt[\h(\e)+\h(\u)]\right)\1\{\dH_1(\e_1,\u_1)\geq \eps N_1\}\right\};\\\label{x2n}
 \fkM_N^{(2)}&=\max_{e\in\Gd}\left\{\sum_{\g_{\e\u}\ni e} \exp\left(-\bt[\h(\e)+\h(\u)]\right)\1\{\dH_1(\e_1,\u_1)< \eps N_1\}\right\};
\end{align}
so we have $\fkM_N\leq\fkM_N^{(1)}+\fkM_N^{(2)}$. 

In Sections \ref{Sec-PropfkM1} and \ref{Sec-PropfkM2}, we prove the following two results, respectively.
\begin{prop}\label{PropfkM1}
 For all $\bt>0$,
 \begin{align}
  \limsup_{N\uparrow \infty} \frac{1}{N}\log \fkM_N^{(1)} \leq F(\bt)+\lg \fkm^*,\fkw^*\rg, \quad \pas
 \end{align}
\end{prop}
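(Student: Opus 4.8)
The plan is to bound $\fkM_N^{(1)}$ by fixing a good edge $e=(\s,\t)$ with $\dH(\s,\t)=1$ and analyzing the sum $\sum_{\g_{\e\u}\ni e}\exp(-\bt[\h(\e)+\h(\u)])$ restricted to pairs $(\e,\u)$ with $\dH_1(\e_1,\u_1)\geq\eps N_1$. Since $\G_N$ is a complete self-avoiding set of paths and each path has length at most $N$, for any fixed edge $e$ the number of ordered pairs $(\e,\u)$ whose path passes through $e$ is at most, roughly, $2^N \cdot N$ — but this crude count is not enough. The key structural observation, inherited from the construction of $\G_N$ and Lemma \ref{Lindep}, is that when $\dH_1(\e_1,\u_1)\geq \eps N_1$ the path $\g_{\e\u}$ is chosen among the $\G^i$ of \eqref{Gi}, so it passes through $e=(\s,\t)$ only if $\s$ (equivalently $\t$, up to the single flipped coordinate) is ``between'' $\e$ and $\u$ in the product-order sense: each coordinate of $\s$ agrees with either $\e$ or $\u$, and the path is determined by the flip order at position $i$. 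This means $\e$ and $\u$ are obtained from $\s$ (or $\t$) by independently choosing, for each coordinate, whether to keep it or flip it toward one endpoint — so $\e$ ranges over a sub-cube ``below'' $\s$ and $\u$ over a sub-cube ``above'' it (with $\t$ fixing the flipped bit). First I would make this decomposition precise and rewrite the inner sum as a product $\big(\sum_{\e \preceq \s}e^{-\bt\h(\e)}\big)\big(\sum_{\u \succeq \t}e^{-\bt\h(\u)}\big)$ over appropriate sub-cubes, possibly with a polynomial factor $N$ accounting for the choice of flipping position.

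Next I would estimate each factor. The sum $\sum_{\e}e^{-\bt\h(\e)}$ over a sub-cube of $\Sg_N$ of linear dimension is, up to the hierarchical constraint $\dH_1(\e_1,\u_1)\ge\eps N_1$ forcing the first-level coordinate to vary over a set of size $\ge 2^{\eps N_1}$, a partition-function-like object for a GREM on a reduced hypercube. The natural tool is the free energy asymptotics \eqref{FrEn}–\eqref{Fk}: one expects $\frac1N\log\sum_{\e}e^{-\bt\h(\e)} \to$ (some free-energy-type expression) $\pas$. Because $\s$ is a \emph{good} vertex, $\h(\s)\le\kappa N$, which lets us absorb the $e^{\bt\h(\s)}$-type boundary terms into the $e^{\kappa\bt N}$ already pulled out in \eqref{cg200}; the content is therefore really the two sub-cube sums. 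I would handle the union over the (exponentially many) edges $e$ and over the sub-cube-defining data by a Borel–Cantelli / first-moment argument: bound $\E$ of each sub-cube sum using Gaussian moment generating functions and the independence structure of $\H$ (recalling $\h(\s),\h(\t)$ are independent iff $\s_1\ne\t_1$), then show the exponential rate of $4^N \cdot \p(\text{sum exceeds } e^{(r+\dt)N})$ is negative for $r$ equal to the claimed rate $F(\bt)+\lg\fkm^*,\fkw^*\rg$. Here I would lean on the variational representation \eqref{IL5}, $\lg\fkm^*,\fkw^*\rg=\max_{x\in\Psi_k}\lg\fkm^*,x\rg$, to identify the rate: one factor contributes the ordinary free energy $F(\bt)$ (summing $e^{-\bt\h(\u)}$ over a full-dimensional sub-cube), and the other contributes the ``tilted'' large-deviation rate $\lg\fkm^*,\fkw^*\rg$ coming from the atypically small values of $\h(\e)$ that dominate the restricted sum — this is exactly where the constraint $\dH_1\ge\eps N_1$ (hence genuine first-level variability) matters, and where the GREM's hierarchical $\Psi_k$ geometry enters through \eqref{IL5}.

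The main obstacle I anticipate is the combinatorial/probabilistic bookkeeping for the sub-cube sums under the hierarchical structure: one must show that the restricted sum $\sum_{\e:\,\dH_1(\e_1,\u_1)\ge\eps N_1}e^{-\bt\h(\e)}$ has leading exponential order governed by $\lg\fkm^*,\fkw^*\rg$ uniformly over the endpoints and edges, which requires a GREM large-deviation estimate for the \emph{minimum-type} contribution (the small-$\h$ tail) rather than the bulk free energy, and this must be combined correctly across the $k$ hierarchical levels — presumably by iterating over levels and optimizing, reproducing the recursive definition of $\fkw$ and the constants $B(i,j)$, $\bt_l$. I would organize this as a sequence of lemmas in Section \ref{Sec-PropfkM1}: (i) the path-geometry decomposition into sub-cube sums; (ii) a first-moment / MGF bound for a single restricted sum with the correct rate, proved by conditioning level by level; (iii) a union bound over all $e$ and endpoint-data plus Borel–Cantelli to upgrade to an almost-sure $\limsup$; and (iv) assembling these, together with the $e^{\kappa\bt N}$ slack from \eqref{cg200} and the arbitrariness of $\kappa,\eps$, to conclude. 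The interplay of (ii) with the convex-geometry identity \eqref{IL5} — making sure the dual/variational formula is exactly what the level-by-level optimization produces — is the delicate point; everything else is estimation of Gaussian sums and counting.
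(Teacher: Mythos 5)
Your high-level plan matches the paper in two important respects: the reduction of $\fkM_N^{(1)}$ to a product of sub-cube partial partition sums $S_{i-1}^{(1)}(\s^i,\s^{>i})\,S_{N-i}^{(1)}(\s^{<i},-\s^i)$ over the canonical path families $\G^i$, and the final identification of the rate through the variational formula $\lg\fkm^*,\fkw^*\rg=\max_{x\in\Psi_k}\lg\fkm^*,x\rg$ together with the convex-projection machinery (the paper's Steps 1, 3 (union bound part), and 5). However, your central probabilistic step (ii) --- ``a first-moment / MGF bound for a single restricted sum'' combined with a union bound over the $\asymp 4^N$ choices of edge and endpoint data --- does not give the claimed rate. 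For the REM ($k=1$) the first moment of $S_{i-1}^{(1)}S_{N-i}^{(1)}$ has exponential rate $\log 2 + \bt^2$, and after paying the factor $2^{N}$ for the union over the free side of the edge the Markov/Borel--Cantelli rate you get is $\bs^2+\bt^2$, which exceeds the target $F(\bt)+\lg\fkm^*,\fkw^*\rg$ by exactly $\tfrac12(\bs-\bt)^2>0$ for all $\bt\neq\bs$. This is the usual annealed-vs-quenched gap: a plain first-moment bound on a (sub-)partition function overshoots because it is dominated by atypically low-energy configurations whose a.s.\ contribution is much smaller. The paper closes this gap by \emph{coarse-graining} the energy coordinates into $L^k$ shells $\Dt_{\l_1,\ldots,\l_k}$, applying Chernoff bounds not to the sums but to the shell occupation counts $\Nst_{\l_{i_1},\ldots,\l_{i_n}}$ (Lemmas~\ref{LemNst}--\ref{LemNst1}, Proposition~\ref{C1}), discarding the over-full shells via Proposition 3.1 of \cite{Picco}, and then performing a deterministic maximization over shells (Steps 4--5, Lemma~\ref{Lempsij}). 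Your mention of ``large deviations for the minimum-type contribution'' and ``conditioning level by level'' gestures in this direction, but as written the MGF/first-moment tool would not reach the right exponent, so step (ii) is a genuine gap.

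Two smaller points. First, the indicator $\1\{\dH_1(\e_1,\u_1)\geq\eps N_1\}$ plays no role in the rate estimation: in the paper it is discarded the moment one passes from $\fkM_N^{(1)}$ to $M_i$ (it only served to split the analysis between Propositions~\ref{PropfkM1} and~\ref{PropfkM2}), so the ``genuine first-level variability'' you invoke is not where $\lg\fkm^*,\fkw^*\rg$ comes from. Second, the two factors do not cleanly decouple into ``$F(\bt)$ from one side and $\lg\fkm^*,\fkw^*\rg$ from the other''; because the cut $i$ may fall \emph{inside} a hierarchical level (parametrized by $\a\in(0,1)$ and the vector $\bma^j$), each factor contributes a fractional-GREM rate not covered by the formula~\eqref{Fk}, and the split into $F(\bt)+\lg\fkm^*,\fkw^*\rg$ only emerges after the joint maximization in Lemma~\ref{Lempsij}, which uses the concatenation trick and Theorem~\ref{PBrezis}.
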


\begin{prop}\label{PropfkM2}
 For all $\bt>0$,
 \begin{align}
  \limsup_{N\uparrow \infty} \frac{1}{N}\log \fkM_N^{(2)} \leq F(\bt)+\lg \fkm^*,\fkw^*\rg, \quad \pas
 \end{align}
\end{prop}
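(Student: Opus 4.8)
The plan is to reduce Proposition~\ref{PropfkM2} to Proposition~\ref{PropfkM1} together with an elementary estimate for a GREM partition function whose first level is confined to a Hamming ball. We work throughout on the $\p$-full event of Proposition~\ref{SetG}, on which every contributing pair $(\e,\u)$ --- i.e.\ one with $\dH_1(\e_1,\u_1)<\eps N_1$ --- is joined in $\G_N$, by item~\ref{Cond2} of the construction, by a concatenation $\g_{\e\o}\cdot\g_{\o\o'}\cdot\g_{\o'\u}$ in which $\o,\o'$ are \emph{good} vertices of $\Sg_N^{\e,\u}$, $\dH_1(\e_1,\o_1)=\dH_1(\o'_1,\u_1)=\lc\eps N_1\rc\geq\eps N_1$, and $\g_{\o\o'}$ flips only the $|\D_1^{\e,\u}|<\eps N_1$ first-level coordinates on which $\e$ and $\u$ disagree. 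Fixing a good edge $e$, I would split the sum defining $\fkM_N^{(2)}$ according to whether $e$ lies on $\g_{\e\o}$, on $\g_{\o\o'}$, or on $\g_{\o'\u}$; the first and third cases are interchanged by $\e\leftrightarrow\u$, so it suffices to treat $e\in\g_{\e\o}$ and $e\in\g_{\o\o'}$.

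For $e\in\g_{\e\o}$: since $\g_{\e\o}$ is one of the $N$ paths of $\G(\e,\o)$ and $\dH_1(\e_1,\o_1)\geq\eps N_1$, the pair $(\e,\o)$ is exactly of the type controlled by Proposition~\ref{PropfkM1}; what is missing is the factor $\h(\o)$ in the exponent. To supply it I would (a) group the contributing triples by $(\e,\o)$: for fixed $(\e,\o)$ the admissible $\u$ differ from $\o$ only on the first level and only within $O(\eps N_1)$ coordinates, so they number at most $e^{c_\eps N}$ with $c_\eps\to0$ as $\eps\to0$, and their energies are i.i.d.\ $\mathcal N(0,N)$; a union bound over the (at most $4^N$) choices of $(\e,\o)$ against the deviations of a maximum of i.i.d.\ Gaussians then gives $\sum_\u e^{-\bt\h(\u)}\leq e^{c'_\eps N}$ for all large $N$, $\p$-a.s., with $c'_\eps\to0$; and (b) use $e^{-\bt\h(\e)}\leq e^{\bt\kappa N}\,e^{-\bt[\h(\e)+\h(\o)]}$, which holds because $\o$ is good. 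This case is thereby bounded by $N\,e^{(\bt\kappa+c'_\eps)N}\max_i\sum_{(\e,\o)\,:\,\g^i_{\e\o}\ni e,\ \dH_1(\e_1,\o_1)\geq\eps N_1}e^{-\bt[\h(\e)+\h(\o)]}$, and Proposition~\ref{PropfkM1} --- whose proof, one checks, bounds the maximum over the path families~\eqref{Gi} at the same rate --- closes it.

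For $e\in\g_{\o\o'}$: here $\g_{\o\o'}$ touches only the first level, so $\s$ shares its levels $2,\dots,k$ with $\o,\o',\u$; hence $\u_j=\s_j$ for $j\geq2$ is forced, the first-level parts of $\s,\o,\o',\e,\u$ all lie within $O(\eps N_1)$ of each other, while $\e_2,\dots,\e_k$ are unconstrained. Enlarging the index set to require only that $\e_1,\u_1$ lie within $O(\eps N_1)$ of $\s_1$ and that $\u_j=\s_j$ for $j\geq2$, the contribution factors and is at most $\big(\sum_{\e\,:\,\dH_1(\e_1,\s_1)=O(\eps N_1)}e^{-\bt\h(\e)}\big)\cdot\big(\sum_{\u\,:\,\u_j=\s_j\ (j\geq2),\ \dH_1(\u_1,\s_1)=O(\eps N_1)}e^{-\bt\h(\u)}\big)$. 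The second factor is a sum of $e^{c_\eps N}$ i.i.d.\ $\mathcal N(0,N)$ terms, of exponential rate $O_\eps(1)$ which vanishes as $\eps\to0$. The first factor $\widetilde Z_N$ is a GREM partition function with the first level confined to a set of exponential size $c_\eps N$; although trivially $\widetilde Z_N\leq Z_N$, so $\limsup_N\tfrac1N\log\widetilde Z_N\leq F(\bt)$ $\p$-a.s., this is too lossy for intermediate $\bt$, and one must read the exact rate of $\widetilde Z_N$ off the GREM free-energy formula (in the high-temperature regime it equals $F(\bt)-p_1(\log2-o_\eps(1))$). Combining the two rates, using that $F(\bt)\geq\lg\fkm^*,\fkw^*\rg$ always and $F(\bt)=\lg\fkm^*,\fkw^*\rg$ for $\bt\geq\bt_{l_k}$ (the identity~\eqref{IL5}), and then letting $\eps,\kappa\to0$, gives rate $\leq F(\bt)+\lg\fkm^*,\fkw^*\rg$ for this case as well.

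Summing the (at most three) cases and letting the construction parameters $\eps,\kappa\to0$ along a sequence --- legitimate, since neither $\lbd_N$ nor the target $F(\bt)+\lg\fkm^*,\fkw^*\rg$ depends on $\eps$ or $\kappa$, so one intersects the corresponding full-probability events --- completes the proof. I expect the main obstacle to be the middle-segment estimate: because $\e$ retains its full higher-level entropy while $\u$ does not, the symmetric bound $\widetilde Z_N\leq Z_N$ is insufficient at intermediate temperatures, and one has to identify the true exponential rate of the first-level-restricted partition function and verify that, added to the (small) rate of the local $\u$-sum, it never exceeds $F(\bt)+\lg\fkm^*,\fkw^*\rg$ --- precisely where the free-energy asymptotics and the convex analysis announced in the abstract enter. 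A secondary difficulty is part~(a)--(b) above: the segments $\g_{\e\o},\g_{\o'\u}$ are not the canonical $\G_N$-paths of their endpoints but members of some $\G^i$, and the replacement of $\h(\u)$ by $\h(\o)$ must be controlled uniformly in the environment.
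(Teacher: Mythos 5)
Your overall plan --- split according to which leg of the concatenated path contains the good edge $e$, handle the leg $\g_{\e\o}$ by reducing to Proposition~\ref{PropfkM1}, and handle the remaining legs by a direct bound exploiting that $\u$ is confined to a small Hamming ball --- is the right one and matches the spirit of the paper. The paper in fact uses only a two-way split, writing $\g_{\e\u}=\g_{\e\o}\cup\g_{\o\u}$, and bounds the two pieces as $\fkN_N'$ (with $e\in\g_{\e\o}$) and $\fkN_N''$ (with $e\in\g_{\o\u}$), so your middle and last cases are handled jointly there.

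There is, however, a genuine gap in your step~(a). You claim that, with $\p$-probability one and for all large $N$, $\sum_{\u}e^{-\bt\h(\u)}\leq e^{c'_\eps N}$ \emph{uniformly} over the (exponentially many) choices of $(\e,\o)$, with $c'_\eps\to 0$. This cannot hold: as $(\e,\o)$ varies, the Hamming balls over which $\u$ ranges cover all of $\Sg_N$ (take $\o$ to be the configuration you want to hit), so some ball contains the ground state $\bsg$, for which $-\bt\h(\bsg)/N\to\lg\fkm^*,\fkw^*\rg$, an $O(1)$ rate not going to zero with $\eps$. A union bound over $\sim 2^N$ centers forces the exponent to be at least of order $\bs$, so the ``small'' rate you want is not available uniformly. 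Consequently step~(b) (inserting $e^{-\bt\h(\o)}$ via the goodness of $\o$ at a cost $e^{\bt\kappa N}$) is also unnecessary: the paper never replaces $\h(\u)$ by $\h(\o)$. Instead, for the leg $e\in\g_{\e\o}$, it keeps $\h(\u)$, notes that each $\u$ arises from at most $4^{\eps N_1}$ intermediate points $\o$ and lies in a $2\eps N_1$-first-level thickening of the same hypercube that produces $S^{(1)}_{N-i}$, and obtains $\fkN_N'(1)\leq 4^{\eps N_1}\max_{i}\max_{\s} S_{i-1}^{(1)}(\s^i,\s^{>i})S_{N-i}^{(1)}(\s^{<i},-\s^i)$; Proposition~\ref{PropfkM1}'s estimate then closes this case.

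Your diagnosis of the middle segment as ``too lossy for intermediate $\bt$'' is also a false alarm, and it stems from the same misestimate of the $\u$-sum. When $e$ lies on $\g_{\o\o'}$ (or, in the paper's grouping, on $\g_{\o\u}$), the admissible $\u$ lie in a ball of radius $O(\eps N_1)$ around the \emph{fixed} edge endpoint $\t$, so one has the surely-valid deterministic bound $\sum_{\u} e^{-\bt\h(\u)}\leq 4^{O(\eps N_1)}\exp(-\bt\h(\bsg))$, whose exponential rate converges to $\lg\fkm^*,\fkw^*\rg$ by Theorem~1.5(iii) of~\cite{Bovier1}. The $\e$-sum is then crudely bounded by $Z_N$ (your $\widetilde Z_N\leq Z_N$), with rate $F(\bt)$. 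The product already gives the target rate $F(\bt)+\lg\fkm^*,\fkw^*\rg$, with no need to identify the exact rate of the first-level-restricted partition function nor to invoke the convex-analysis machinery here. This is exactly the paper's bound $\fkN_N''\leq 4^{\eps N_1}\exp(-\bt\h(\bsg))Z_N$. To summarize: your decomposition and final $\eps,\kappa\to0$ argument are sound, but the uniform small-deviation claim in step~(a) is false and must be replaced by the $4^{\eps N_1}$ counting argument that reduces the first leg to the $S^{(1)}_{i-1}S^{(1)}_{N-i}$ estimate of Proposition~\ref{PropfkM1}, and the anticipated difficulty in the middle leg disappears once the $\u$-sum is bounded by the ground state rather than by $e^{o_\eps(N)}$.
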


These propositions, combined with \eqref{FrEn},
immediately yield Theorem \ref{Thm1}.


\section{Proof of Proposition \ref{PropfkM1}}\label{Sec-PropfkM1}
We follow the  strategy  in
\cite{fontes1998spectral} (see Subsection 4.2 therein), with steps that are increasingly more involved than in the $k=1$ case of that reference; in particular, our last two steps depart considerably from the direct approach there.
\paragraph{Step 1 -- Bound in terms of $\G^1,\ldots,\G^N$.}
Since the set $\G_N$ is constructed using paths in $\bigcup_{i=1}^N\G^i$, if we denote
\begin{align}
 M_i=\max_{e\in\Gd}\sum_{\g_{\e\u}^i\ni e} \exp\left(-\bt[\h(\e)+\h(\u)]\right),
\end{align}
for $i=1,\ldots,N$ and $M_{(N)}=M_1\vee \cdots\vee M_N$, we get the estimate
\begin{align}\label{cg21}
 \fkM_N^{(1)}\leq N M_{(N)}.
\end{align}
Since $M_1,\ldots,M_N$ 
are identically distributed, it is sufficient to give an estimate for one of them with a relatively good probability estimate. Consider thus 
 \begin{align}
  M_1=\max_{1\leq i\leq N} \max_{\substack{e=(\s,\t)\\ \s^i\neq \t^i}}\left\{\sum_{\g_{\e\u}^1\ni e} \exp\left(-\bt[\h(\e)+\h(\u)]\right)\right\}.
 \end{align}

For a given edge $e=(\s,\t)$, there exists a unique coordinate $i\in\{1,\ldots,N\}$ such that $\s^i\neq \t^i$. So that, by construction, the set of all pairs $(\e,\u)$ such that $\g_{\e\u}^1\ni e$ is exactly
 \begin{align}
  \left(\bigcup_{\e\in\{-1,+1\}^{i-1}} \{(\e,\s^i,\ldots,\s^N)\} \right)\times \left(\bigcup_{\u\in\{-1,+1\}^{N-i}}\{(\t^1,\ldots,\t^i,\u)\}\right).
 \end{align}
 Then, if we denote $\s^{>i}=(\s^{i+1},\ldots,\s^N)$, $\s^{<i}=(\s^1,\ldots,\s^{i-1})$,
 \begin{align}
  S_{i-1}^{(1)}(\s^i,\s^{>i})=\sum_{\e\in\{-1,+1\}^{i-1}}\exp\left(-\bt\h(\e,\s^i,\s^{>i})\right)
 \end{align}
 and
 \begin{align}
  S_{N-i}^{(1)}(\s^{<i},-\s^i)=\sum_{\u\in\{-1,+1\}^{N-i}}\exp\left(-\bt\h(\s^{<i},-\s^i,\u)\right),
 \end{align}
 we obtain the bound 
  \begin{align}\label{cg3}
  M_1\leq \max_{1\leq i\leq N} \max_{\s\in\Sg_N} S_{i-1}^{(1)}(\s^i,\s^{>i})S_{N-i}^{(1)}(\s^{<i},-\s^i).
 \end{align}
\paragraph{Step 2 -- Coarse graining.}
 Now we will focus on estimating the right-hand side of \eqref{cg3}. Before turning to this, let us briefly describe our strategy. 
 We partition the $k$-dimensional Euclidean space into subsets $\Dt_{\l_1,\ldots,\l_k}$, and
 analyse separately the contribution to 
 $S_{i-1}^{(1)}(\s^i,\s^{>i})$ and $S_{N-i}^{(1)}(\s^{<i},-\s^i)$ coming from each $\Dt_{\l_1,\ldots,\l_k}$,
 by means of large deviation-type estimates, thus securing control over the exponentially many terms involved in the above maximization. 
%
%
 It is enough to study $S_{i-1}^{(1)}(\s^i,\s^{>i})$ in detail; the case of $S_{N-i}^{(1)}(\s^{<i},-\s^i)$ is entirely similar.
 
 Let $1\leq i\leq N$, $\s^i=\pm 1$ and $\s^{>i}\in\{-1,+1\}^{N-i}$ be fixed, 
 and let $j$ be such that $i\in\{1+\sum_{n=1}^{j-1}N_n,\ldots,\sum_{n=1}^j N_n\}$. 
 Let $\a$ be such that $\a N_j=i-(1+\sum_{n=1}^{j-1}N_n)$, and set $\bma^j=(\a_1,\ldots,\a_k)$ such that
 \begin{align}\label{ALPHAs}
  \begin{aligned}
   \a_n&=1,\quad&&\text{if }n<j,\\
        &=\a,\quad&&\text{if }n=j,\\
        &=0,\quad &&\text{if }n>j.
  \end{aligned}
 \end{align}
 Let $\Sg_{r,s}^{\bma^j}=\Sg_{\a_r N_r}\times \cdots\times \Sg_{\a_s N_s}$, $1\leq r\leq s\leq k$. We can thus write
 \begin{align}
  \{-1,+1\}^{i-1}=\Sg_{1,j}^{\bma^j},
 \end{align}
 and 
 \begin{align}
  \{-1,+1\}^{N-i}=\Sg_{j,k}^{\bmo-\bma^j}, 
 \end{align}
 where $\ba_j=1-\a_j$, and $\bmo=(1,\ldots,1)$. We stress the relationship between $i$, $j$ and $\a$ established in this paragraph.
 
 \begin{rmk}
  Notice that if $i\in\{1,N_1,N_1+N_2,\ldots,N\}$ (cases equivalent to $\a\in\{0,1\}$), then 
  we readily get that
  \begin{align}\label{cg4}
   M_1^*\equiv\max_{i\in\{1,N_1,\ldots,N\}}\max_{\s\in\Sg_N} S_{i-1}^{(1)}(\s^i,\s^{>i})S_{N-i}^{(1)}(\s^{<i},-\s^i)\leq \exp\left(-\bt\h(\bsg)\right)Z_N.
  \end{align}
  By Theorem 1.5(iii) in \cite{Bovier1} and \eqref{FrEn}, we thus have that for all $\bt>0$,
  \begin{align}
   \limsup_{N\uparrow \infty} \frac{1}{N}\log M_1^* \leq F(\bt)+\lg \fkm^*,\fkw^*\rg.
  \end{align}
 \end{rmk}


For convenience, we enumerate/represent 
$$
\left\{-E^{(n)}_{\t_1\cdots\t_n};\,n=1,\ldots,k;\,\t\in\{-1,+1\}^{i-1}\times\{\s^i\s^{>i}\}\right\}
$$
as
\begin{equation}\label{rep}
\left\{E_{u_1,\ldots,u_n}^{(n)};\, u_n=1,\ldots,2^{\a_nN_n};\, n=1,\ldots,k\right\}.
\end{equation}

Set $E_u=(E_{u_1}^{(1)},\ldots,E_u^{(k)})$, $u=(u_1,\ldots,u_k)$. 
With this notation, $S_{i-1}^{(1)}(\s^i,\s^{>i})$ can be written as
 \begin{align}
  S_{i-1}^{(1)}(\s^i,\s^{>i})=\sum_{u_1=1}^{2^{\a_1 N_1}}\cdots\sum_{u_j=1}^{2^{\a_j N_j}} \exp\left(\lg\fkm^*,E_u\rg\right).
 \end{align}
 
 Let $L\in\N$ and 
 consider the following partition of $\R^k$:
 $$
  \R^k=\bigcup_{\l_1=0}^{L+1}\cdots\bigcup_{\l_k=0}^{L+1} \Delta_{\l_1,\ldots,\l_k},\quad \text{with }\Delta_{\l_1,\ldots,\l_k}=\Delta_{\l_1}^1\times\cdots\times\Delta_{\l_k}^k,
 $$
 where for $n=1,\ldots,k$, we set
 \begin{align}
  \begin{aligned}
   \Delta_{\l_n}^n &=\left(-\infty,\frac1L\bs \sqrt{P_n}N\right],\quad &&\text{if }\l_n=0;\\
                   &=\left( \frac{\ell_n}{L}\bs\sqrt{P_n}N,\frac{\ell_n+1}{L}\bs\sqrt{P_n}N\right],\quad&&\text{if }\l_n=1,\ldots,L;\\
                   &=\left(\left(1+\frac1L\right) \bs\sqrt{P_n}N,\infty \right),\quad&&\text{if }\l_n=L+1.
  \end{aligned}
 \end{align}
 Now we decompose $S_{i-1}^{(1)}(\s^i,\s^{>i})$ in the following way:
 \begin{align}\label{B1}
  \begin{aligned}
   S_{i-1}^{(1)}(\s^i,\s^{>i})=\sum_{\l_1,\ldots,\l_k=0}^L&\left(\sum_{u_1=1}^{2^{\a_1 N_1}}\cdots\sum_{u_j=1}^{2^{\a_j N_j}}\1\{E_u\in\Delta_{\l_1,\ldots,\l_k}\}\right)\exp\left(\lg \fkm^*,E_u\rg\right)\\
   &\quad +\sum_{u_1=1}^{2^{\a_1 N_1}}\cdots\sum_{u_j=1}^{2^{\a_j N_j}}\left(\sum_{\L^*}\1\{E_u\in\Delta_{\l_1,\ldots,\l_k}\}\right)\exp\left(\lg \fkm^*,E_u\rg\right),
  \end{aligned}
 \end{align}
 where $\L^*:=\{0\leq \l_1,\ldots,\l_k\leq L+1:\exists\, n\in\{1,\ldots,k\}\mbox{ such that } \l_n=L+1\}$.
 
 First we consider the last sum in the right-hand side of \eqref{B1}; denote it by $S_N^*$. 
 We will show that this quantity is zero  for all $N$  large enough $\pas$ Indeed, we note first that
 \begin{align}\label{sns}
  S_N^*\leq \exp\left(-\h(\bsg)\right)\sum_{u_1=1}^{2^{N_1}}\cdots\sum_{u_k=1}^{2^{N_k}}\1\{E_u\in\cup_{\L^*}\Delta_{\l_1,\ldots,\l_k}\}.
 \end{align}
 Now consider the event
 \begin{align}
  \A_{L,N}=\left\{\forall j=1,\ldots,k, \forall u_1,\ldots,u_k, \sum_{i=1}^j (E_{u_1,\ldots,u_i}^{(i)})^2 \leq \left(1+\frac1L\right)P_jN\right\}.
 \end{align}
 One readily checks that $\left\{\sum_{u_1}\cdots\sum_{u_k}\1\{E_u\in\cup_{\L^*}\Delta_{\l_1,\ldots,\l_k}\}\geq 1\right\}\subset \A_{L,N}^c$, so that, from Proposition 3.1 in \cite{Picco}, we have that the sum in~(\ref{sns}),
 and thus  $S_N^*$,
 vanishes for all large $N$ $\pas$ 

\paragraph{Step 3 -- Large deviation estimate.}
 
 It remains to bound the first term in the right-hand side of \eqref{B1}. In order to do this, we need to introduce some notation. Given $0\leq r \leq s \leq k$, define the canonical projection $\Pi_r^s\colon\R^k\to\R^{s-r}$  such that
 $\Pi_r^s x=(x_{r+1},\ldots,x_s)$, where 
 by convention $\Pi_s^s\equiv 0$. Set $\Psi_r^s\equiv\Pi_r^s\Psi_k=\{\Pi_r^s x:x\in\Psi_k\}$. Now, let $\Phi_r^s:\Psi_r^s\to[0,\infty)$ be the functional defined by
 \begin{align}\label{firs}
  x \mapsto \Phi_r^s(x)=\lg \Pi_r^s\fkm^*,x\rg.
 \end{align}
 We remark that by compactness and convexity, $\Phi_r^s$ admits a unique maximum on $\Pi_r^s\Psi_k$, at say $z_r^s\in\partial (\Pi_r^s\Psi_k)$; 
 set $\cPhi_r^s=\Phi_r^s(z_r^s)=\lg \Pi_r^s\fkm^*,z_r^s\rg$. 
 We note that $\|z_r^s\|^2=\bs^2\sum_{m=r+1}^s p_m$.
 
 For each $0\leq r\leq s\leq k$, let us set
 \begin{align}\label{dQ}
     Q_r^s\equiv Q_r^{s,\bma^j}=\sum_{m=r+1}^s \a_m p_m,
 \end{align}
 with the convention that $Q_r^r\equiv 0$.
 
Let now $\ux=(\ux_1,\ldots,\ux_k)\in\R^k$ be such that $\ux_n=\frac{\l_n}L\bs\sqrt{P_n}$, $n=1,\ldots,k$.
 With the above terminology, we have that $\pas$ for all $N$ large enough,
 \begin{align}\label{cg450}
  \begin{aligned}
   S_{i-1}^{(1)}(\s^i,\s^{>i}) \leq e^{\tfrac{\bs^2}{2}\,Q_0^j N} e^{\frac kL \bs\bt N}+ S_j^N
  \end{aligned}
 \end{align}
 where
 \begin{align}\label{cg45}
  S_j^N=L^{k-j}\exp\left(\left[\frac kL\bs\bt + \cPhi_j^k\right]N\right)\sum_{n=1}^j\sum_{[i_1,\ldots,i_n]}\sum_{\l_{i_1},\ldots,\l_{i_n}=1}^L K_{\l_{i_1},\ldots,\l_{i_n}} \exp\left(\lg \Pi_0^j\fkm^*,\Pi_0^j \ux \rg N\right),
 \end{align} 
 with the middle sum above 
 being over all sequences of integers $1\leq i_1<\cdots<i_n \leq j$, and
 \begin{align}\label{Nl0}
  \begin{aligned}
   K_{\l_{i_1},\ldots,\l_{i_n}}&\equiv K_{\l_{i_1},\ldots,\l_{i_n}}(1,i,\s^i,\s^{>i})\\
   & =e^{\tfrac{\bs^2}{2}\,Q_{i_n}^jN} \sum_{u_1=1}^{2^{\a_1 N_1}}\cdots\sum_{u_{i_n}=1}^{2^{\a_{i_n} N_{i_n}}}\prod_{r=1}^n \1\{E_{u_1,\ldots,u_{i_r}}^{(i_r)}\in\Delta_{\l_{i_r}}^{i_r}\},
  \end{aligned}
 \end{align}
 
 \begin{rmk}\label{rmk2}
  Note that in \eqref{cg45} the point $\Pi_0^j \ux=(\ux_1,\ldots,\ux_j)$ is such that $\ux_r=0$ for all $r\neq i_1,\ldots,i_n$.
 \end{rmk}
 
 Let $n\in\{1,\ldots,j\}$, $[i_1,\ldots,i_n]$ and $1\leq \l_{i_1},\ldots,\l_{i_n}\leq L$ be fixed. For $1\leq r\leq n$, set
 \begin{align}\label{dVr}
  \Vs_r=\sum_{s=i_{r-1}+1}^{i_r} \a_s N_s;\quad 
  \ps_r=\sum_{s=i_{r-1}+1}^{i_r} \a_s p_s;\quad
 \end{align}
 where $i_0=0$. 
 %
%
 With this notation, we 
 write
 \begin{align}\label{Nl1}
  \begin{aligned}
   K_{\l_{i_1},\ldots,\l_{i_n}}&=e^{\tfrac{\bs^2}{2}\,Q_{i_n}^jN}\sum_{u_1=1}^{2^{\Vs_1}}\cdots\sum_{u_n=1}^{2^{\Vs_n}}\prod_{r=1}^n \1\{E_{u_1,\ldots,u_r}^{(i_r)}\in\Delta_{\l_{i_r}}^{i_r}\}:= e^{\tfrac{\bs^2}{2}\,Q_{i_n}^jN} \Nst_{\l_{i_1},\ldots,\l_{i_n}}.
  \end{aligned}
 \end{align}
 
 Now let us estimate $\Nst_{\l_{i_1},\ldots,\l_{i_n}}$. Let $\qs_{\l_{i_r}}=\p(E_{u_1,\ldots,u_{i_r}}^{(i_r)}\in \Dt_{\l_{i_r}}^{i_r})$. 
 We then have for all $r=1,\ldots,n$ and $N$ large enough that
 \begin{align}\label{iq}
  e^{-\tfrac{1}{2}\, \ox_{i_r}^2 N}\leq \qs_{\l_{i_r}} \leq e^{-\tfrac{1}{2}\, \ux_{i_r}^2N},\quad\ox_{i_r}=\frac{\l_{i_r}+1}L\bs\sqrt{P_{i_r}}.
 \end{align}
 
 Let now $\cs>0$ be a positive constant to be specified later, and define the following family of integers. For all $1\leq r\leq s \leq n$, set $U_{r,s}=\prod_{m=r}^s \qs_{\l_{i_m}} 2^{\Vs_m}$. Let $J_0=0$ and recursively define
\begin{align}
 J_{\nu}=\max\{s:\,J_{\,\nu-1}<s\leq n:U_{J_{\nu-1}+1,s}<\cs N\}
\end{align}
until $\nu=\nu_n\in\{0,\ldots,n\}$ such that $J_{\nu_n}=n$ or $U_{J_{\nu_n}+1,s}\geq \cs N$ for all $J_{\nu_n}+1 \leq s \leq n$. Put $J_{\nu_n+1}=n+1$. We then have that $0=J_0<J_1<\cdots<J_{\nu_n}<J_{\nu_n+1}= n+1$. Moreover, for every $\nu=0,\ldots,\nu_n$,
  \begin{align}\label{irmk3}
   U_{J_{\nu}+1,s}\geq \cs N, \quad \forall s\in\{J_{\nu}+1,\ldots,J_{\nu+1}-1\}.
  \end{align}
At last, if $\nu_n=0$, then put
 \begin{align}\label{drs}
  \rs_{\l_{i_r}}=4 \quad \text{for all } r=1,\ldots,n;
  \end{align}
  otherwise, that is, if $\nu_n\in\{1,\ldots,n\}$, then put
  \begin{align}
   \begin{aligned}
    \rs_{\l_{i_r}}&=4\cs N U_{J_{\nu-1}+1,J_{\nu}}^{-1},\quad&&\text{if }r=J_{\nu}\text{ for some }\nu=1,\ldots,\nu_n;\\
     &=4,\quad&&\text{if }r\neq J_1,\ldots,J_{\nu_n}.
   \end{aligned}
  \end{align}

 \begin{lem}\label{LemNst}
  With the notation introduced above, for any $\cs>0$ and $1\leq \l_{i_1},\ldots,\l_{i_n}\leq L$, the following holds for all 
  large enough $N$.
  \begin{align}\label{dLemNst}
   \p \left(\Nst_{\l_{i_1},\ldots,\l_{i_n}} > \prod_{r=1}^n \rs_{\l_{i_r}}\qs_{\l_{i_r}} 2^{\Vs_r}\right)\leq n e^{-\cs N}.
  \end{align}
 \end{lem}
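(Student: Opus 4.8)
The plan is to read $\Nst_{\l_{i_1},\ldots,\l_{i_n}}$ as the number of surviving leaves of a layered random tree and to estimate it by peeling off one layer at a time. For $0\le s\le n$ set $N_s=\bigl|\{(u_1,\ldots,u_s):u_r\in\{1,\ldots,2^{\Vs_r}\},\ E^{(i_r)}_{u_1,\ldots,u_r}\in\Delta_{\l_{i_r}}^{i_r}\text{ for all }1\le r\le s\}\bigr|$, so that $N_0=1$ and $N_n=\Nst_{\l_{i_1},\ldots,\l_{i_n}}$. Since the Gaussians $E^{(i_s)}_{\cdot}$ are mutually independent and independent of the $\sigma$-algebra $\F_{s-1}$ generated by $E^{(i_1)}_{\cdot},\ldots,E^{(i_{s-1})}_{\cdot}$, conditionally on $\F_{s-1}$ the variable $N_s$ is distributed as $\mathrm{Bin}\bigl(N_{s-1}2^{\Vs_s},\qs_{\l_{i_s}}\bigr)$, with conditional mean $N_{s-1}\,2^{\Vs_s}\qs_{\l_{i_s}}$.

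First I would introduce, for $s=1,\ldots,n$, the events $G_s=\{N_s\le 4\,N_{s-1}2^{\Vs_s}\qs_{\l_{i_s}}\}\cup\{N_s\le 4\cs N\}$, and check, using Chernoff bounds for the binomial, that $\p(G_s^{\,c}\mid\F_{s-1})\le e^{-\cs N}$ for $N$ large: on $\{N_{s-1}2^{\Vs_s}\qs_{\l_{i_s}}\ge\cs N\}$ use $\p(\mathrm{Bin}(M,p)\ge 4Mp)\le(e^{3}/4^{4})^{Mp}$, and on its complement use $\p(\mathrm{Bin}(M,p)\ge a)\le(eMp/a)^{a}$ with $a=4\cs N\ge 4Mp$, which yields $(e/4)^{4\cs N}$; both exponents are at most $-\cs N$. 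A union bound then gives $\p\bigl(\bigcup_{s=1}^{n}G_s^{\,c}\bigr)\le n\,e^{-\cs N}$, so it remains to show that on $\bigcap_{s=1}^{n}G_s$ one has, deterministically, $N_n\le\prod_{r=1}^{n}\rs_{\l_{i_r}}\qs_{\l_{i_r}}2^{\Vs_r}$.

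For this deterministic part I would induct over the blocks $[J_{\nu-1}+1,J_\nu]$, $\nu=1,\ldots,\nu_n$, and the terminal stretch $[J_{\nu_n}+1,n]$, tracking $U_{r,s}:=\prod_{m=r}^{s}\qs_{\l_{i_m}}2^{\Vs_m}$; if $N_s=0$ for some $s$ there is nothing to prove, so assume $N_s\ge 1$ throughout. The intermediate claim is $N_{J_\nu}\le 4^{J_\nu}(\cs N)^{\nu}$ for each $\nu$. Inside a block $[a,b]$ one has $U_{a,s}\ge\cs N$ for $a\le s\le b-1$ by~\eqref{irmk3}, so on $\bigcap_s G_s$ an induction on $s$ gives $N_s\le 4^{\,s-a+1}U_{a,s}N_{a-1}$ for those $s$: the first alternative of $G_s$ produces the multiplicative step, while the second gives $N_s\le 4\cs N\le 4^{\,s-a+1}U_{a,s}N_{a-1}$, using $U_{a,s}\ge\cs N$ and $N_{a-1}\ge 1$. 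At the endpoint $s=b=J_\nu$ one has instead $U_{a,b}<\cs N$ by the definition of $J_\nu$, and this together with the inductive hypothesis $N_{J_{\nu-1}}\le 4^{J_{\nu-1}}(\cs N)^{\nu-1}$ forces $N_{J_\nu}\le 4^{J_\nu}(\cs N)^{\nu}$. Finally, since $U_{J_{\nu_n}+1,s}\ge\cs N$ for every $s$ in the terminal stretch, running the same in-block estimate up to $s=n$ gives $N_n\le 4^{\,n-J_{\nu_n}}U_{J_{\nu_n}+1,n}\,N_{J_{\nu_n}}\le 4^{n}(\cs N)^{\nu_n}U_{J_{\nu_n}+1,n}$, and a direct computation from the piecewise definition of the $\rs$'s shows that $4^{n}(\cs N)^{\nu_n}U_{J_{\nu_n}+1,n}=\prod_{r=1}^{n}\rs_{\l_{i_r}}\qs_{\l_{i_r}}2^{\Vs_r}$, which closes the argument.

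I expect the main obstacle to be bookkeeping rather than conceptual: meshing the block induction with the piecewise definition of $\rs_{\l_{i_r}}$ and handling the degenerate cases ($\nu_n=0$, singleton blocks, $J_{\nu_n}=n$) without arithmetic slips. Making the threshold on $N$ uniform over the finitely many triples $(n,[i_1,\ldots,i_n],(\l_{i_1},\ldots,\l_{i_n}))$ is harmless, since there are only polynomially many of them while each $\p(G_s^{\,c})$ is exponentially small in $\cs N$.
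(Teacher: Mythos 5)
Your proof is correct, and it takes a genuinely different route from the paper's. The paper performs a direct induction on $n$ for the tail probability $\p(\Nst_{\l_{i_1},\ldots,\l_{i_n}} > b_n)$: at each step it splits on whether $\Nst_{\l_{i_1},\ldots,\l_{i_{n-1}}}$ exceeds $b_{n-1}$, and applies Chernoff once per layer to a binomial of size (at most) $b_{n-1}2^{\Vs_n}$; the crucial input is Lemma~\ref{LemNst1}, which packages the block structure of the $\rs_{\l_{i_r}}$'s into the single inequality $\vartheta_n\geq 3\cs N$, guaranteeing that each layer's Chernoff exponent already beats $\cs N$. You instead introduce per-level events $G_s$ with two transparent alternatives (multiplicative deviation by a factor $4$, or absolute threshold $4\cs N$), get $\p(G_s^{\,c}\mid\F_{s-1})\le e^{-\cs N}$ with no reference to the blocks, union-bound over $s$, and then push all of the $J_\nu$ bookkeeping into a purely deterministic induction showing $\bigcap_s G_s\Rightarrow N_n\le b_n$, using the identity $b_n=4^n(\cs N)^{\nu_n}U_{J_{\nu_n}+1,n}$ (which the paper also uses, inside Lemma~\ref{LemNst1}). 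The underlying ingredients coincide --- conditioning on $\F_{s-1}$, binomial domination, Chernoff, and property~\eqref{irmk3} of the blocks --- but your organization decouples the probabilistic estimate from the deterministic arithmetic: the tuning of the thresholds $\rs_{\l_{i_r}}$ never enters the probability bound, only the combinatorial verification. This makes the probabilistic step essentially immediate, at the price of a more elaborate (but elementary) block induction for the deterministic implication; the paper's version is more compact because the two ingredients are fused, with the tuning hidden in Lemma~\ref{LemNst1}. Either way the final bound $n e^{-\cs N}$ comes out the same.
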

 
 In the proof of Lemma~\ref{LemNst}, we will use the following result.
  \begin{lem}\label{LemNst1}
   For each $1\leq n\leq j$, let $b_n=\prod_{r=1}^n \rs_{\l_{i_r}}\qs_{\l_{i_r}} 2^{\Vs_r}$. 
   Set 
   $$b_0\equiv 1\mbox{ and }\vartheta_n \equiv \left(\rs_{\l_{i_n}}-1\right)b_{n-1}\qs_{\l_{i_n}}2^{\Vs_n}.$$ 
   Then
   \begin{align}\label{Fact1}
    \vartheta_n  \geq 3 \cs N.
   \end{align}
  \end{lem}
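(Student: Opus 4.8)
The plan is to prove Lemma~\ref{LemNst1} by unwinding the definitions of $b_{n-1}$, $\rs_{\l_{i_n}}$ and $\vartheta_n$, and splitting into the two cases that arise in the definition of $\rs_{\l_{i_n}}$: the case $\nu_n=0$ (where $\rs_{\l_{i_r}}=4$ for all $r$) and the case $\nu_n\geq 1$. In the first case, $\rs_{\l_{i_n}}-1=3$, so we must show $3\,b_{n-1}\qs_{\l_{i_n}}2^{\Vs_n}\geq 3\cs N$, i.e.\ $b_{n-1}\qs_{\l_{i_n}}2^{\Vs_n}\geq \cs N$; but this is exactly $3\,U_{1,n}$ up to the accumulated factors of $4$, and since $\nu_n=0$ means $U_{1,s}\geq \cs N$ for all $s$ in the relevant range (in particular for $s=n$, using~\eqref{irmk3} with $\nu=0$, $J_0=0$, $J_1-1=n$), and the $\rs$-factors in $b_{n-1}$ are all $\geq 1$, the bound follows.

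In the second case $\nu_n\geq 1$, I would further split according to whether $n=J_{\nu_n}$ for some $\nu$ or not. If $n\neq J_1,\ldots,J_{\nu_n}$, then $\rs_{\l_{i_n}}-1=3$ again, and $n$ lies in a block $\{J_{\nu}+1,\ldots,J_{\nu+1}-1\}$, so~\eqref{irmk3} gives $U_{J_{\nu}+1,n}\geq\cs N$; writing $b_{n-1}\qs_{\l_{i_n}}2^{\Vs_n}$ as $\Big(\prod_{r=1}^{n}\rs_{\l_{i_r}}\Big)$-weighted $U_{1,n}\geq U_{J_\nu+1,n}\geq \cs N$ (again dropping the factors $\rs\geq 1$ and the initial part $U_{1,J_\nu}$, which is $\geq 1$), we get $\vartheta_n\geq 3\cs N$. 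If instead $n=J_{\nu_n}$, then $\rs_{\l_{i_n}}=4\cs N\,U_{J_{\nu_n-1}+1,n}^{-1}$, so $\rs_{\l_{i_n}}-1\geq 3\cs N\,U_{J_{\nu_n-1}+1,n}^{-1}$ provided $\rs_{\l_{i_n}}\geq 4$, equivalently $U_{J_{\nu_n-1}+1,n}\leq\cs N$, which holds by the very definition of $J_{\nu_n}$ as a $\max$ over $s$ with $U_{J_{\nu_n-1}+1,s}<\cs N$. Then
\begin{align*}
 \vartheta_n = (\rs_{\l_{i_n}}-1)\,b_{n-1}\,\qs_{\l_{i_n}}2^{\Vs_n}
 \geq 3\cs N\,\frac{b_{n-1}\,\qs_{\l_{i_n}}2^{\Vs_n}}{U_{J_{\nu_n-1}+1,n}}
 = 3\cs N\,\Big(\prod_{r=1}^{n-1}\rs_{\l_{i_r}}\Big)\frac{U_{1,n}}{U_{J_{\nu_n-1}+1,n}}
 = 3\cs N\,\Big(\prod_{r=1}^{n-1}\rs_{\l_{i_r}}\Big)U_{1,J_{\nu_n-1}},
\end{align*}
and since $U_{1,J_{\nu_n-1}}\geq 1$ (it is a product of the integers $\qs_{\l_{i_m}}2^{\Vs_m}$, or $1$ if the index range is empty — here one should note $\qs 2^{\Vs}$ is an expected count, hence $\geq$ its actual value which could be $0$; the clean way is to carry the factor through and observe that in the recursion these products telescope so that only $U_{1,n}$ survives multiplied by the $\rs$'s, all $\geq 1$) and each $\rs_{\l_{i_r}}\geq 1$, we conclude $\vartheta_n\geq 3\cs N$.

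The bookkeeping point I would be most careful about is the identity $b_{n-1}\qs_{\l_{i_n}}2^{\Vs_n}=\big(\prod_{r=1}^{n}\rs_{\l_{i_r}}\big)U_{1,n}$ and, in the $n=J_{\nu_n}$ subcase, the cancellation $U_{1,n}/U_{J_{\nu_n-1}+1,n}=U_{1,J_{\nu_n-1}}$, both of which are immediate from the multiplicativity of $U_{r,s}=\prod_{m=r}^{s}\qs_{\l_{i_m}}2^{\Vs_m}$ but need the convention $U_{r,s}=1$ for $s<r$ to be stated. The main (minor) obstacle is thus purely notational: making sure the telescoping of the $U_{r,s}$'s and the factors $\rs_{\l_{i_r}}$ (each of which is $\geq 1$ by construction, since $4\cs N\,U_{J_{\nu-1}+1,J_\nu}^{-1}\geq 4$ exactly when $U_{J_{\nu-1}+1,J_\nu}\leq\cs N$, which again holds by the defining $\max$) line up correctly, so that in every case the right-hand side reduces to $3\cs N$ times a product of quantities that are each at least $1$. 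No probabilistic input is needed; this is a deterministic inequality used later to drive the Borel--Cantelli argument behind Lemma~\ref{LemNst}.
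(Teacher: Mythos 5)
Your approach is sound in outline but has a genuine gap in the bookkeeping: you repeatedly invoke the claim that $U_{1,J_\nu}\geq 1$ (resp.\ $U_{1,J_{\nu_n-1}}\geq 1$), and this is simply not true in general. Each $\qs_{\l_{i_m}}2^{\Vs_m}$ is a probability times a cardinality, and from~\eqref{iq} one sees that $\qs_{\l_{i_m}}2^{\Vs_m}\approx\exp\bigl(\tfrac N2(\bs^2\ps_m-\ox_{i_m}^2)\bigr)$, which is exponentially \emph{small} whenever $\ox_{i_m}^2>\bs^2\ps_m$ (large $\l_{i_m}$); so individual factors of $U_{1,J_\nu}$, and hence the product, can be far below~$1$. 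Indeed, $U_{J_{\nu-1}+1,J_\nu}<\cs N$ is all that the definition of $J_\nu$ gives, with no lower bound. Your parenthetical ``clean way'' recognizes the danger but mis-states the telescoping: it is not ``only $U_{1,n}$ survives multiplied by the $\rs$'s.'' The correct identity is
\begin{align*}
\prod_{r=1}^{n}\rs_{\l_{i_r}}\,U_{1,n}
=4^{\,n-\nu_n}\prod_{\nu=1}^{\nu_n}\frac{4\cs N}{U_{J_{\nu-1}+1,J_\nu}}\cdot U_{1,n}
=4^{n}(\cs N)^{\nu_n}\,U_{J_{\nu_n}+1,n},
\end{align*}
with $U_{J_{\nu_n}+1,n}\geq\cs N$ when $J_{\nu_n}<n$ and $U_{J_{\nu_n}+1,n}=1$ when $J_{\nu_n}=n$. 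The potentially small ``initial part'' $U_{1,J_{\nu_n}}$ cancels against the $\rs$'s at the block endpoints; you cannot simply ``drop'' it as $\geq 1$.

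Once that identity is in hand, your case analysis collapses: the paper's proof avoids the cases entirely by noting that $b_n=\rs_{\l_{i_n}}b_{n-1}\qs_{\l_{i_n}}2^{\Vs_n}$, so $\vartheta_n=(1-1/\rs_{\l_{i_n}})\,b_n\geq\tfrac34 b_n$ (using $\rs_{\l_{i_n}}\geq 4$, which always holds, as you note yourself), and then simply applying the telescoped formula $b_n=4^{n}(\cs N)^{\nu_n}U_{J_{\nu_n}+1,n}\geq 4\cs N$. That uses $\rs\geq 4$ rather than $\rs\geq 1$ (which you repeatedly cite as the weaker bound to justify dropping factors, but $\geq 1$ is not what makes the argument close; the telescoping is). Structurally your split into $\nu_n=0$, $n\notin\{J_1,\ldots,J_{\nu_n}\}$, and $n=J_{\nu_n}$ mirrors where the definition of $\rs$ branches, but each branch still needs the telescoped formula to be rigorous; in its current form the argument has a hole at the ``$U_{1,J_\nu}\geq 1$'' step.
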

  
  \begin{proof}
   From the definition of the $\rs_{\l_{i_r}}$ and \eqref{irmk3}, it follows that
   \begin{align}
    \rs_{\l_{i_r}}\geq 4 \quad \text{and}\quad b_n=4^n (\cs N)^{\nu_n}U_{J_{\nu_n}+1,n}\geq 4\cs N,
   \end{align}
   and thus $\vartheta_n\geq 3b_n/4 \geq 3\cs N$.
  \end{proof}
  
  \begin{proof}[Proof of the Lemma \ref{LemNst}]
   We will argue by induction on $n$. In the case $n=1$, since $\rs_{\l_{i_1}}\geq 4$, by Chernoff's inequality and Lemma \ref{LemNst1} above, we have
   \begin{align}
     \p\left(\Nst_{\l_{i_1}} >b_1\right)=\p\left(\Nst_{\l_{i_1}} >\rs_{\l_{i_1}}\qs_{\l_{i_1}}2^{\Vs_1}\right)\leq e^{-\frac{1}{3}\vartheta_1 }\leq e^{-\cs N}.
   \end{align}
   Assume that \eqref{dLemNst} is proved for $n-1$. Introducing the random set $$I_{n-1}=\{(u_1,\ldots,u_{n-1}):E_{u_1,\ldots,u_r}^{(i_r)}\in\Dt_{\l_{i_r}}^{i_r},\forall r=1,\ldots,n-1\}$$ 
   and taking into account the independence of the Gaussian random variables, we may write   
   \begin{align}\label{dLemNst2}
    \begin{aligned}
     \p\left(\Nst_{\l_{i_1},\ldots,\l_{i_n}}>b_n\right)
     &\leq \p\left(\Nst_{\l_{i_1},\ldots,\l_{i_{n-1}}}>b_{n-1}\right)\\
     &+\p\left(\sum_{I_{n-1}}\sum_{u_n=1}^{2^{\Vs_n}}\1\{E_{u_1,\ldots,u_n}^{(i_n)}\in\Dt_{\l_{i_n}}^{i_n}\}> b_n \middle | \Nst_{\l_{i_1},\ldots,\l_{i_{n-1}}}\leq b_{n-1}\right)\\
     &\leq (n-1) e^{-\cs N}+\p\left(\sum_{u_0=1}^{b_{n-1}2^{\Vs_n}} \1\{E_{u_0}\in\Dt_{\l_{i_n}}^{i_n}\}> b_n\right),    
    \end{aligned}
   \end{align}
   where we also use on the second inequality the induction hypothesis \eqref{dLemNst} for $n-1$; here, $\{E_{u_0}\}$ is a relabeling of the random variables $\{E_{u_1,\ldots,u_n}^{(i_n)}\}$. It remains to bound the last term on the right-hand side of \eqref{dLemNst2}. Notice that $b_n=\rs_{\l_{i_n}}b_{n-1}\qs_{\l_{i_n}}2^{\Vs_n}$. Since $\rs_{\l_{i_n}}\geq 4$, it follows from Chernoff's inequality and Lemma \ref{LemNst1} above that
   \begin{align}
    \p\left(\sum_{u_0=1}^{b_{n-1}2^{\Vs_n}} \1\{E_{u_0}\in\Dt_{\l_{i_n}}^{i_n}\}> \rs_{\l_{i_n}}b_{n-1}\qs_{\l_{i_n}}2^{\Vs_n}\right) \leq e^{-\frac{1}{3} \vartheta_n}\leq e^{-\cs N}.
   \end{align}
   This concludes the proof. 
  \end{proof}

 Coming back to \eqref{cg3}, we need to make a probability estimate which holds for all possible random variables $\Nst_{\l_{i_1},\ldots,\l_{i_n}}$ involved in the max signs. Recall that there is an index for the chosen path family, the index $i$, the configurations $\s^i$, $\s^{>i}$, and the indices $n$, $[i_1,\ldots,i_n]$ and $\l_{i_1},\ldots,\l_{i_n}$. Since there are not more than $2L^kN^22^{N+k}$ distinct such objects, it suffices to have a probability estimate in \eqref{dLemNst} to compensate for this factor. This suggests the choice of $\cs$ for the following result,
 which is immediate from Lemma~\ref{LemNst} and the union bound.  
 \begin{prop}\label{C1}
  Given $\dt>0$, assume that $\cs>\log 2 +2\dt$. Then for all $N$ sufficiently large,
  \begin{align}\label{sjn}
   \p\left( \exists \; \Nst_{\l_{i_1},\ldots, \l_{i_n}}> \prod_{r=1}^n\rs_{\l_{i_r}}\qs_{\l_{i_r}} 2^{\Vs_r}\right) \leq e^{-\dt N}. 
  \end{align}
 \end{prop}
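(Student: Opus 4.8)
The plan is to deduce Proposition~\ref{C1} from Lemma~\ref{LemNst} by a crude union bound over all the random variables $\Nst_{\l_{i_1},\ldots,\l_{i_n}}$ that enter the maximizations performed in Step~3. Lemma~\ref{LemNst} controls, for each fixed tuple of indices, the probability that one such variable exceeds its target threshold $\prod_{r=1}^n\rs_{\l_{i_r}}\qs_{\l_{i_r}}2^{\Vs_r}$; since the number of these variables grows only like a polynomial in $N$ times $2^N$, a choice of $\cs$ slightly above $\log 2$ will produce a net exponentially small bound, with rate dictated by the gap $\cs-\log 2$.

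Concretely, I would first recall (as noted just before the statement) that a variable $\Nst_{\l_{i_1},\ldots,\l_{i_n}}$ is determined by the symmetric case (at most $2$), the coordinate $i\in\{1,\ldots,N\}$, the spin $\s^i\in\{-1,+1\}$, the tail block $\s^{>i}\in\{-1,+1\}^{N-i}$ (at most $2^{N}$ choices), the integer $n\in\{1,\ldots,j\}\subseteq\{1,\ldots,k\}$, the increasing sequence $[i_1,\ldots,i_n]$ (at most $2^{k}$ choices) and the levels $\l_{i_1},\ldots,\l_{i_n}\in\{1,\ldots,L\}$ (at most $L^{k}$ choices); hence there are at most $2L^kN^22^{N+k}$ of them. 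For each fixed such tuple, Lemma~\ref{LemNst} gives, for all $N$ large enough,
\[
  \p\!\left(\Nst_{\l_{i_1},\ldots,\l_{i_n}} > \prod_{r=1}^n \rs_{\l_{i_r}}\qs_{\l_{i_r}} 2^{\Vs_r}\right)\leq n\,e^{-\cs N}\leq N\,e^{-\cs N},
\]
and the threshold $N_0$ produced there comes only from the two-sided bound \eqref{iq} on $\qs_{\l_{i_r}}$ and from Chernoff's inequality, so — because every level size satisfies $N_{i_r}\geq\min_{1\leq n\leq k}N_n\to\infty$ — it can be taken uniform over all index tuples. Summing over the at most $2L^kN^22^{N+k}$ tuples by the union bound yields
\[
  \p\!\left( \exists \; \Nst_{\l_{i_1},\ldots, \l_{i_n}}> \prod_{r=1}^n\rs_{\l_{i_r}}\qs_{\l_{i_r}} 2^{\Vs_r}\right) \leq 2L^kN^22^{N+k}\cdot N e^{-\cs N}=2^{k+1}L^k N^3\, e^{-(\cs-\log 2)N}.
\]

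Finally, under the hypothesis $\cs>\log 2+2\dt$ one has $\cs-\log2>2\dt$, so the right-hand side is bounded by $2^{k+1}L^k N^3 e^{-2\dt N}$; since $k$ and $L$ are fixed, $2^{k+1}L^kN^3 e^{-\dt N}\to 0$ as $N\uparrow\infty$, hence this quantity is $\leq 1$ for all $N$ large, and the bound becomes $e^{-\dt N}$, as claimed. I do not expect a genuine obstacle here: the only points deserving care are the exact bookkeeping of the number of $\Nst$'s (so the prefactor is truly $\mathrm{poly}(N)\,2^{N}$, which is what makes the choice $\cs>\log2+2\dt$ work) and the observation that the "$N$ sufficiently large'' in Lemma~\ref{LemNst} is uniform in the index tuples, which holds because all block sizes $N_n$ diverge with $N$.
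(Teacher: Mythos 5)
Your argument is correct and is exactly what the paper intends: the proof there is stated to be "immediate from Lemma~\ref{LemNst} and the union bound," using the same enumeration of the at most (polynomial in $N$)$\times 2^{N}$ variables $\Nst_{\l_{i_1},\ldots,\l_{i_n}}$ that you carry out. Your extra remark that the "$N$ large enough" threshold in Lemma~\ref{LemNst} can be taken uniform over index tuples (because every block size $N_n$ diverges) is a point the paper leaves implicit, and your bookkeeping correctly shows that $\cs>\log 2+2\dt$ absorbs both the $2^N$ count and the polynomial prefactor.
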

 
 In view of \eqref{cg45} and \eqref{Nl1}, one readily deduces from~(\ref{sjn}) that for any given $\dt>0$, with $\p-$probability $\geq 1-e^{-\dt N}$, for all $N$ large enough,
    \begin{align}\label{cg5}
   \begin{aligned}
    S_j^N\leq L^{k-j}\exp\left(\left[\frac kL\bs\bt + \cPhi_j^k\right]N\right) 
    \sum_{n=1}^j\sum_{[i_1,\ldots,i_n]} \T^{(n)}_{i_1,\ldots,i_n},
   \end{aligned}
  \end{align}
  where
     \begin{align}\label{cg6}
     \T^{(n)}_{i_1,\ldots,i_n}=
     e^{\tfrac{\bs^2}{2}\,Q_{i_n}^jN} \sum_{\l_{i_1},\ldots,\l_{i_n}=1}^L \left(\prod_{r=1}^n \rs_{\l_{i_r}}\qs_{\l_{i_r}} 2^{\Vs_r}\right) \exp\left(N\lg \Pi_0^j\fkm^*,\Pi_0^j \ux \rg\right).
  \end{align}

 The next step is to estimate (the non-random term on) the right hand side of~(\ref{cg6}).

 \paragraph{Step 4 -- Deterministic estimation.}

 It is worth noticing at this point that
 we have to make our estimation uniform with respect to all the indices involved.
 
 Let $n\in\{1,\ldots,j\}$ and $[i_1,\ldots,i_n]$ be fixed. We partition the support of the sum in \eqref{cg6} 
 into the subsets
 \begin{align}
  \I_{n,j}(s)=\I_{n,j}(s)[i_1,\ldots,i_n]=\{1\leq \l_{i_1},\ldots,\l_{i_n}\leq L : J_{\nu_n}=s\}, \quad s=0,\ldots,n.
 \end{align}
 If $\l_{i_1},\ldots,\l_{i_n}\in\I_{n,j}(s)$, then we have from \eqref{iq} that
 \begin{align}
  \prod_{r=1}^n \rs_{\l_{i_r}}\qs_{\l_{i_r}} 2^{\Vs_r} \leq (4 \cs N)^j \exp\left(\tfrac{1}{2}N\sum_{r=s+1}^n(\bs^2 \ps_r-\ux_{i_r}^2)\right)
 \end{align}
 for all $N$ large enough, where the exponential factor is not present if $s=n$. \eqref{cg6} can thus be bounded above by
 \begin{align}\label{cg7}
  (4 \cs N)^j \sum_{s=0}^n \sum_{\I_{n,j}(s)} \exp\left(\tfrac{1}{2}\left[\bs^2Q_{i_n}^j+\1\{s\neq n\}\sum_{r=s+1}^n(\bs^2 \ps_r-\ux_{i_r}^2)+ 2\lg \Pi_0^j\fkm^*,\Pi_0^j \ux \rg\right]N\right).
 \end{align}
 
 For $0\leq s\leq n$, let 
 \begin{align}
  \St_{n,j}(s)=\sum_{\I_{n,j}(s)} \exp\left(\tfrac{1}{2}\left[\bs^2Q_{i_n}^j+\1\{s\neq n\}\sum_{r=s+1}^n(\bs^2 \ps_r-\ux_{i_r}^2)+ 2\lg \Pi_0^j\fkm^*,\Pi_0^j \ux \rg\right]N\right)
 \end{align}
 We will estimate $\St_{n,j}(s)$ by distinguishing the case $s\in\{0,\ldots,n-1\}$ from the case $s=n$.
 
 \medskip
 
 \noindent{\bf Case I:} $s\in\{0,\ldots,n-1\}$. 
 From \eqref{dQ}, \eqref{dVr} and Remark \ref{rmk2}, we get that
 \begin{align}\label{cg61}
  \bs^2Q_{i_n}^j+\sum_{r=s+1}^n(\bs^2 \ps_r-\ux_{i_r}^2)=\bs^2Q_{i_s}^j-\lg \Pi_{i_s}^j\ux,\Pi_{i_s}^j \ux \rg.
 \end{align}
 Now, from basic properties of inner product, we can also write
 \begin{align}\label{cg62}
  2\lg \Pi_0^j\fkm^*,\Pi_0^j \ux \rg-\lg \Pi_{i_s}^j\ux,\Pi_{i_s}^j \ux \rg=2\lg \Pi_0^{i_s}\fkm^*,\Pi_0^{i_s} \ux \rg+\left\|\Pi_{i_s}^j\fkm^*\right\|^2-\left\|\Pi_{i_s}^j\fkm^*-\Pi_{i_s}^j \ux\right\|^2.
 \end{align}
 Moreover, by construction, if $\l_{i_1},\ldots,\l_{i_n}\in\I_{n,j}(s)$, then $\Pi_0^{i_s} \ux\in \Psi_0^{i_s}$,
  and thus
 \begin{align}\label{cg63}
  \eqref{cg62}\leq 2 \cPhi_0^{i_s}+\left\|\Pi_{i_s}^j\fkm^*\right\|^2-\left\|\Pi_{i_s}^j\fkm^*-\Pi_{i_s}^j \ux\right\|^2.
 \end{align}
 Thus, by suitably using \eqref{cg61} and \eqref{cg63}, we get that 
 \begin{align}\label{cg64}
  \St_{n,j}(s)\leq \exp\left(\cPhi_0^{i_s}N\right)\sum_{\I_{n,j}(s)} \exp\left(\tfrac{1}{2}\left[\bs^2Q_{i_s}^j+\left\|\Pi_{i_s}^j\fkm^*\right\|^2-\left\|\Pi_{i_s}^j\fkm^*-\Pi_{i_s}^j \ux\right\|^2\right]N\right).
 \end{align}
 
 Now, for $0\leq r \leq s\leq k$, set 
 \begin{align}
  \Psi_{r,s}^{\bma^j}=\left\{x\in\R^{s-r}:\forall m=1,\ldots,s-r, \sum_{n=1}^m x_n^2 \leq \bs^2\sum_{n=r+1}^{r+m} \a_n p_n\right\},
 \end{align}
 and note that $\Psi_{r,s}^{\bma^j}$ is a nonempty closed convex subset of $\R^{s-r}$ so, from Theorem \ref{PBrezis} (see Appendix \ref{AP1}), there exists a unique element of $\Psi_{r,s}^{\bma^j}$, say $\fkw_{r,s}^{\bma^j}$, such that
 \begin{align}\label{dist}
  \d(\Pi_r^s\fkm^*,\Psi_{r,s}^{\bma^j})=\left\|\Pi_r^s\fkm^*-\fkw_{r,s}^{\bma^j}\right\|.
 \end{align}
 
 Since we have $s=J_{\nu_n}$, it is not difficult to see with the help of Remark \ref{rmk2} and \eqref{irmk3} that $\Pi_{i_s}^j \ux\in \Psi_{i_s,j}^{\bma^j}$; it then follows from~\eqref{dist} that 
 \begin{align}\label{cg65}
  \left\|\Pi_{i_s}^j\fkm^*-\Pi_{i_s}^j \ux\right\|^2\geq \left\|\Pi_{i_s}^j\fkm^*-\fkw_{i_s,j}^{\bma^j}\right\|^2.
 \end{align}
 
 For each $0\leq l\leq r\leq k$, let
 \begin{subequations}
  \begin{alignat}
   2 G_{l,r}(\bt,\bma^j)&=\frac{\bs^2}2 Q_l^{r,\bma^j}+
   \frac12\left(\left\|\Pi_l^r\fkm^*\right\|^2-\left\|\Pi_l^r\fkm^*-\fkw_{l,r}^{\bma^j}\right\|^2\label{Gnj1}\right)\\
   &=\frac{\bs^2}2 Q_l^{r,\bma^j}+ \lg\Pi_l^r\fkm^*,\fkw_{l,r}^{\bma^j}\rg - \frac12\lg\fkw_{l,r}^{\bma^j},\fkw_{l,r}^{\bma^j}\rg.\label{Gnj2}
  \end{alignat}
 \end{subequations}
 With this definition, \eqref{cg64} and \eqref{cg65} imply that for all $N$ large enough and for any $s=0,\ldots,n-1$,
 \begin{align}
  \St_{n,j}(s)\leq L^j \exp\left(N\left[\cPhi_0^{i_s}+G_{i_s,j}(\bt,\bma^j)\right]\right).
 \end{align}
 
 \medskip

 Before going to the next case, let us point out that 
  for any $0\leq l \leq r \leq k$, we have
  \begin{align}\label{qg}
   \bs^2 Q_l^{r,\bma^j} \leq 2 G_{l,r}(\bt,\bma^j).
  \end{align}
 Indeed,
  let $\L_{l,r}^{\bma^j}\colon\Psi_{l,r}^{\bma^j}\to \R$ be given by $\L_{l,r}^{\bma^j}(x)=\lg\Pi_l^r\fkm^*-\fkw_{l,r}^{\bma^j},x-\fkw_{l,r}^{\bma^j}\rg$, and notice, from  \eqref{Gnj2}, 
  that 
  \begin{align}
   \bs^2 Q_l^{r,\bma^j}-2 G_{l,r}(\bt,\bma^j)=\lg\fkw_{l,r}^{\bma^j},\fkw_{l,r}^{\bma^j}\rg-2\lg\Pi_l^r\fkm^*,\fkw_{l,r}^{\bma^j}\rg \leq \L_{l,r}^{\bma^j}(0).
  \end{align}
 According to Theorem \ref{PBrezis} (see Appendix \ref{AP1}), we have that $\L_{l,r}^{\bma^j}(x)\leq 0$ for all $x\in\Psi_l^{r,\bma^j}$. The claim follows.

 \noindent{\bf Case II:} $s=n$. In this case, since
 \begin{align}
  \lg \Pi_0^j\fkm^*,\Pi_0^j \ux \rg=\lg \Pi_0^{i_n}\fkm^*,\Pi_0^{i_n} \ux \rg\leq \cPhi_0^{i_n},
 \end{align}
 it is immediate from~(\ref{qg}) that $\St_{n,j}(n)$ can be estimated by
 \begin{align}
 \St_{n,j}(n)\leq L^j \exp\left(N[\cPhi_0^{i_n}+G_{i_n,j}(\bt,\bma^j)]\right)
 \end{align}
 for all large enough $N$. Summarizing and coming back to \eqref{cg7}, we get that
 \begin{align}\label{cg8}
  \begin{aligned}
   \eqref{cg6}&\leq c(LN)^j \exp\left(N{\textstyle\bigvee}_{s=0}^n\left[\cPhi_0^{i_s}+G_{i_s,j}(\bt,\bma^j)\right]\right)\\
   &\leq c(L N)^j \exp\left(N {\textstyle\bigvee}_{s=0}^j \left[\cPhi_0^s+G_{s,j}(\bt,\bma^j)\right]\right)
  \end{aligned}
 \end{align}
 for all $N$ sufficiently large, for some $c>0$ not depending on $N$ or $L$, where $\cPhi_0^0 \equiv G_{s,s}\equiv 0$.
   
 Recall 
 \eqref{cg5}. In view of \eqref{cg8} and standard combinatorial estimates, we obtain that for any $\dt>0$, with a $\p-$probability $\geq 1-e^{-\dt N}$ for all $N$ large enough,
  \begin{align}\label{cg66}
   S_j^N\leq c(LN)^k e^{\frac kL \bs\bt N } \exp\left(N \cPhi_j^k + N {\textstyle\bigvee}_{s=0}^j \left[\cPhi_0^s+G_{s,j}(\bt,\bma^j)\right]\right)
  \end{align}
  for some constant $c>0$. This concludes the estimation of $S_j^N$. 
  
  \medskip
  
  Let us now recall 
  \eqref{cg450}. Since we have already estimated $S_j^N$, it remains to estimate the term 
  $e^{\frac kL \bs\bt N }e^{\tfrac{\bs^2}{2}\,Q_0^j N}$. From~(\ref{qg}), 
  we readily find that
  \begin{align}\label{cg67}
   e^{\tfrac{\bs^2}{2}\,Q_0^j N}\leq 
   \exp\left( G_{0,j}(\bt,\bma^j )N\right).
  \end{align}
  
  It follows from Proposition \ref{C1}, \eqref{cg66} and \eqref{cg67} that for any $\dt>0$, with a $\p-$probability $\geq 1-e^{-\dt N}$ for all $N$ large enough,
  \begin{align}
   \max_{\s^i,\s^{>i}} S_{i-1}^{(1)}(\s^i,\s^{>i})\leq c (LN)^k e^{\frac kL \bs\bt N } 
   \exp\left\{\left(\cPhi_j^k + N {\textstyle\bigvee}_{s=0}^j \left[\cPhi_0^s+G_{s,j}(\bt,\bma^j)\right]\right)N\right\}.
  \end{align}
  Symmetrically, we also have that
  \begin{align}
   \max_{\s^{<i},-\s^i} S_{N-i}^{(1)}(\s^{<i},-\s^i)\leq c(LN)^k e^{\frac kL \bs\bt N }
   \exp\left\{\left( \cPhi_0^{j-1} + {\textstyle\bigvee}_{r=j-1}^k \left[\cPhi_{j-1}^r+G_{r,k}(\bt,\bmo-\bma^j)\right]\right)N\right\}.
  \end{align}
  
  Thus, letting 
  \begin{align}\label{psij}
   \psi_j(\bt,\bma^j)=\cPhi_0^{j-1}+\cPhi_j^k + {\textstyle\bigvee}_{s=0}^j {\textstyle\bigvee}_{r=j-1}^k \left[\cPhi_0^s+\cPhi_{j-1}^r+G_{s,j}(\bt,\bma^j)+G_{r,k}(\bt,\bmo-\bma^j)\right],
  \end{align}
  we get that with $\p-$probability $\geq 1-e^{-\dt N}$ for all $N$ large enough,
  \begin{align}\label{maxSs}
   \max_{\s\in\Sg_N} S_{i-1}^{(1)}(\s^i,\s^{>i})S_{N-i}^{(1)}(\s^{<i},-\s^i)\leq c^2 (LN)^{2k} 
   e^{2\frac kL \bs\bt N }\exp\left( \psi_j(\bt,\bma^j) N\right).
  \end{align}
  
  \paragraph{Step 5 -- Maximization.}

  As a final step, 
  it remains to maximize  $\psi_j(\bt,\bma^j)$ over $j\in\{1,\ldots,k\}$ and $\a\in [0,1]$. We do this in the following lemma.
  
  \begin{lem}\label{Lempsij}
   For every $1\leq j\leq k$ and all $0\leq \a\leq 1$,
   \begin{align}
    \psi_j(\bt,\bma^j)\leq \lg \fkm^*,\fkw^* \rg +F(\bt).
   \end{align}
  \end{lem}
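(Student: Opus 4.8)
The plan is to reduce the inequality $\psi_j(\bt,\bma^j)\leq\lg\fkm^*,\fkw^*\rg+F(\bt)$ to a statement about maximizing a linear functional over the convex body $\Psi_k$, so that the right-hand side can be recognized via \eqref{IL5} and \eqref{Fk}. First I would unpack the definition \eqref{psij}: for fixed $j$ and $\a$, $\psi_j$ is the sum $\cPhi_0^{j-1}+\cPhi_j^k$ plus a max over $s\in\{0,\dots,j\}$ and $r\in\{j-1,\dots,k\}$ of $\cPhi_0^s+\cPhi_{j-1}^r+G_{s,j}(\bt,\bma^j)+G_{r,k}(\bt,\bmo-\bma^j)$. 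Using the representation \eqref{Gnj2} of $G_{l,r}$ in terms of the nearest-point projection $\fkw_{l,r}^{\bma^j}$ onto the truncated convex set $\Psi_{l,r}^{\bma^j}$, together with the definitions $\cPhi_r^s=\lg\Pi_r^s\fkm^*,z_r^s\rg$, I would assemble, for each pair $(s,r)$ realizing the max, a candidate point in $\R^k$ whose blocks are $z_0^s$ (coordinates $1,\dots,s$), then the block $\fkw_{s,j}^{\bma^j}$, then $\fkw_{r,k}^{\bmo-\bma^j}$, suitably interleaved on the $j$-th level where $\bma^j$ splits $N_j$ into an $\a$-fraction and a $(1-\a)$-fraction, and $z_{j-1}^r$ on the remaining coordinates. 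The key point to verify is that, after accounting for the $\a$-weighted splitting of block $j$, the concatenated vector lies in $\Psi_k$: each partial-sum-of-squares constraint $\sum_{i=1}^m x_i^2\le\bs^2 P_m$ must hold, and this should follow from the defining constraints of $\Psi_{l,r}^{\bma^j}$ (which involve $\a_np_n$) and $\Psi_k$ (which involve $p_n$) because $\a_j p_j+(1-\a_j)p_j=p_j$, so the two sub-blocks on level $j$ add up correctly.

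Next I would compute the value of the linear functional $\lg\fkm^*,\cdot\rg=\lg\bt\fka,\cdot\rg$ at this candidate point and check that it dominates $\psi_j(\bt,\bma^j)$. This is where the quadratic terms in \eqref{Gnj2} — namely $\lg\Pi_l^r\fkm^*,\fkw_{l,r}\rg-\tfrac12\lg\fkw_{l,r},\fkw_{l,r}\rg$ — have to be controlled by the linear term $\lg\Pi_l^r\fkm^*,\fkw_{l,r}\rg$; the gap is exactly $\tfrac{\bs^2}{2}Q_l^r-(\lg\Pi_l^r\fkm^*,\fkw_{l,r}\rg-\tfrac12\lg\fkw_{l,r},\fkw_{l,r}\rg)\ge0$, which is precisely the content of the auxiliary inequality \eqref{qg} proved just above via the variational inequality for the projection (Theorem \ref{PBrezis}). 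Feeding \eqref{qg} into $G_{l,r}$ replaces each $G$-term by $\tfrac{\bs^2}{2}Q_l^r$ from below, but I actually want an upper bound on $\psi_j$, so instead I would keep $G_{l,r}=\tfrac{\bs^2}{2}Q_l^r+\tfrac12(\|\Pi_l^r\fkm^*\|^2-\d(\Pi_l^r\fkm^*,\Psi_{l,r}^{\bma^j})^2)$ exactly and recognize each such expression, by the same algebra as in \eqref{Fk} (completing the square), as $\lg\Pi_l^r\fkm^*,\fkw_{l,r}^{\bma^j}\rg$ plus a manifestly nonpositive correction, so that $\psi_j$ is bounded above by $\lg\fkm^*,x\rg$ for the assembled point $x\in\Psi_k$, hence by $\max_{x\in\Psi_k}\lg\fkm^*,x\rg$. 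Finally, $\max_{x\in\Psi_k}\lg\fkm^*,x\rg=\lg\fkm^*,\fkw^*\rg$ by Lemma \ref{L5}, and I would absorb the remaining discrepancy into $F(\bt)$ using the explicit formula \eqref{Fk} for $F(\bt)$, observing that $F(\bt)-\lg\fkm^*,\fkw^*\rg\ge0$ in general (with equality for $\bt\ge\bt_{l_k}$), which leaves the needed slack.

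The main obstacle I anticipate is bookkeeping the interleaving on the $j$-th level: the vector $\bma^j$ assigns weight $\a$ to coordinate $j$ in the "left" family and weight $1-\a$ in the "right" family, so the candidate point on block $j$ is glued from $\fkw_{s,j}^{\bma^j}$ and $\fkw_{r,k}^{\bmo-\bma^j}$ in a way whose partial-sum constraints must be matched against $\Psi_k$'s single constraint $\sum_{i=1}^{P_j\text{-block}}x_i^2\le\bs^2 P_j$; making the inequality $\sum(\text{left})^2+\sum(\text{right})^2\le\bs^2(\a_j p_j+\ba_j p_j)=\bs^2 p_j$ rigorous, uniformly over $s,r,\a,j$, is the delicate combinatorial-geometric core. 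I would handle it by treating the concatenation level by level, using the nesting $\Psi_{0,s}\supseteq\Pi_0^s\Psi_k$-type inclusions and the fact that the projection $\fkw_{l,r}^{\bma^j}$ satisfies all the constraints of $\Psi_{l,r}^{\bma^j}$ by definition, then checking that these constraints, summed across the three concatenated pieces, imply the corresponding $\Psi_k$ constraints — exactly because the $\a$-weights on level $j$ complement to $1$. Once containment in $\Psi_k$ is secured, the rest is the algebraic identity behind \eqref{Fk} and an appeal to \eqref{IL5}.
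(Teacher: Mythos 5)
Your overall strategy is in the right spirit and matches the paper's: build a candidate vector in $\Psi_k$ by concatenating the blocks $z_0^s$, $\fkw_{s,j}^{\bma^j}$, and (part of) $\fkw_{j-1,k}^{\bmo-\bma^j}$, complete the square, and invoke \eqref{Fk} together with Lemma \ref{L5}. But several of the concrete steps you describe do not hold up.

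First, the candidate point you propose overcounts dimensions: $z_0^s$ contributes $s$ coordinates, $\fkw_{s,j}^{\bma^j}$ contributes $j-s$, $\fkw_{r,k}^{\bmo-\bma^j}$ contributes $k-r$, and adding $z_{j-1}^r$ contributes another $r-j+1$, for a total of $k+1$. There is an overlap at level $j$ that must be resolved. The paper handles this not by inserting $z_{j-1}^r$, but by \emph{splitting off} the first coordinate of $\fkw_{j-1,k}^{\bmo-\bma^j}$ (the one sitting at level $j$), treating it as a scalar bounded by $\cPhi_{j-1}^j$ via maximality, and concatenating only the remaining tail $\Pi_1^{k-j+1}\fkw_{j-1,k}^{\bmo-\bma^j}$ with $z_0^s\circ\fkw_{s,j}^{\bma^j}$, yielding a genuine point of $\Psi_k$. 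That separated scalar is precisely how the $\cPhi_{j-1}^j$ term, and hence the $\lg\fkm^*,\fkw^*\rg$ part of the bound via $\cPhi_0^{j-1}+\cPhi_{j-1}^j+\cPhi_j^k\le\lg\fkm^*,\fkw^*\rg$, enters. Your containment-in-$\Psi_k$ observation, that the $\a_n p_n$ and $(1-\a_n)p_n$ weights complement to $p_n$, is correct and is exactly what makes the concatenation land in $\Psi_k$.

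Second, and more seriously, your claim that $\psi_j$ is bounded above by $\lg\fkm^*,x\rg$ for the assembled $x\in\Psi_k$, with the $F(\bt)$ on the right-hand side arising as ``slack'' from $F(\bt)-\lg\fkm^*,\fkw^*\rg\ge0$, is not how the proof works and would not close the argument. The quadratic $\tfrac{\bs^2}{2}Q$-type contributions inside the $G$-terms do not cancel: the identity $Q_s^{j,\bma^j}+Q_{j-1}^{k,\bmo-\bma^j}=\sum_{n=s+1}^k p_n$ combines them into a $\tfrac{\bs^2}{2}$ that, after completing the square against the assembled vector $x$, produces exactly $\tfrac{\bs^2}{2}+\tfrac12\|\fkm^*\|^2-\tfrac12\|\fkm^*-x\|^2$. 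This is bounded by $F(\bt)$ because $\|\fkm^*-x\|\ge\|\fkm^*-\fkw\|=\d(\fkm^*,\Psi_k)$ (by Theorem \ref{PBrezis}, since $x\in\Psi_k$) and $F(\bt)=\tfrac12(\bs^2+\|\fkm^*\|^2-\|\fkm^*-\fkw\|^2)$ by \eqref{Fk}. So $F(\bt)$ appears intrinsically from the completed square, not as a slack to absorb leftovers, and your intermediate assertion that $G_{l,r}$ equals $\lg\Pi_l^r\fkm^*,\fkw_{l,r}\rg$ plus a manifestly nonpositive correction is neither established nor, on its face, true. To repair the plan you should drop the $z_{j-1}^r$ block, peel off the level-$j$ component of $\fkw_{j-1,k}^{\bmo-\bma^j}$ to produce $\cPhi_{j-1}^j$, and track the $\tfrac{\bs^2}{2}Q$-sums explicitly so that the completed square delivers $F(\bt)$ rather than relying on $\lg\fkm^*,\fkw^*\rg$ alone.
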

  
  \begin{proof}
   Recall the definitions \eqref{ALPHAs} of $\bma^j$, \eqref{dQ} of $Q_r^{s,\bma^j}$ and (\ref{Gnj1}-b) of $G_{r,s}(\bt,\bma^j)$. Also recall, from the discussion at the beginning of Step 3 above, that $\cPhi_r^s=\lg \Pi_r^s\fkm^*,z_r^s \rg$ denotes the maximum of $\Phi_r^s$ over $\Psi_r^s$, attained at point $z_r^s$. 
   We claim that
   \begin{align}\label{cg9}
     \cPhi_0^s+\cPhi_{j-1}^r+G_{s,j}(\bt,\bma^j)+G_{r,k}(\bt,\bmo-\bma^j)\leq \cPhi_{j-1}^j+F(\bt),
   \end{align}
   for any $0\leq s \leq j$ and any $j-1 \leq r \leq k$.
   
   We check this for $0\leq s< j$ and $r=j-1$. The other cases follows from similar arguments. Noting that $\cPhi_{j-1}^r$ is not present in the left-side of \eqref{cg9}, from definition of $\bma^j$, it is equal to 
   \begin{align}\label{cg91}
       \begin{aligned}
          \lg \Pi_0^s\fkm^*,z_0^s\rg + \frac{\bs^2}{2}\sum_{n=s+1}^k p_n &+ \lg\Pi_s^j\fkm^*,\fkw_{s,j}^{\bma^j}\rg - \frac12\lg\fkw_{s,j}^{\bma^j},\fkw_{s,j}^{\bma^j}\rg\\
          &+ \lg\Pi_{j-1}^k\fkm^*,\fkw_{j-1,k}^{\bmo-\bma^j}\rg - \frac12\lg\fkw_{j-1,k}^{\bmo-\bma^j},\fkw_{j-1,k}^{\bmo-\bma^j}\rg.
       \end{aligned}
   \end{align}
   Now, using ``$\circ$'' to indicate vector concatenation, it is immediate to observe that $\fkw_{j-1,k}^{\bmo-\bma^j}=\Pi_0^1 \fkw_{j-1,k}^{\bmo-\bma^j}\circ \Pi_1^{k-j+1}\fkw_{j-1,k}^{\bmo-\bma^j}$, and so we find that the expression in~\eqref{cg91} equals
   \begin{align}\label{cg92}
       \begin{aligned}
          & \,\fkm_j^*\cdot \Pi_0^1 \fkw_{j-1,k}^{\bmo-\bma^j} +\frac{\bs^2}{2}\sum_{t=s+1}^k p_t+ \lg\fkm^*,z_0^s\circ\fkw_{s,j}^{\bma^j}\circ\Pi_1^{k-j+1}\fkw_{j-1,k}^{\bmo-\bma^j}\rg \\
          &\qquad\qquad\qquad - \frac12\lg\fkw_{s,j}^{\bma^j}\circ\Pi_1^{k-j+1}\fkw_{j-1,k}^{\bmo-\bma^j},\fkw_{s,j}^{\bma^j}\circ\Pi_1^{k-j+1}\fkw_{j-1,k}^{\bmo-\bma^j}\rg - \frac 12(\Pi_0^1 \fkw_{j-1,k}^{\bmo-\bma^j})^2\\
          \leq&\, \cPhi_{j-1}^j + \frac{\bs^2}{2} +
          \frac12\left\|\fkm^*\right\|^2-\frac12\left\|\fkm^*-z_0^s\circ\fkw_{s,j}^{\bma^j}\circ\Pi_1^{k-j+1}\fkw_{j-1,k}^{\bmo-\bma^j}\right\|^2,
       \end{aligned}
   \end{align}
   where the inequality follows from the the facts that $\lg z_0^s,z_0^s\rg=\bs^2 \sum_{n=1}^s p_n$ (as noted right below~\eqref{firs}) and $\fkm_j^*\cdot \Pi_0^1 \fkw_{j-1,k}^{\bmo-\bma^j}\leq \cPhi_{j-1}^j$ (by the maximality of the latter quantity). 
   Convexity now implies that $z_0^s\circ\fkw_{s,j}^{\bma^j}\circ\Pi_1^{k-j+1}\fkw_{j-1,k}^{\bmo-\bma^j}\in\Psi_k$, and thus from Theorem \ref{PBrezis} we may conclude that
   \begin{align}
    \left\|\fkm^*-z_0^s\circ\fkw_{s,j}^{\bma^j}\circ\Pi_1^{k-j+1}\fkw_{j-1,k}^{\bmo-\bma^j}\right\|\geq\left\|\fkm^*-\fkw\right\|=\d(\fkm^*,\Psi_k).
   \end{align}
   \eqref{cg9} is now just a matter of recalling \eqref{Fk}.
   
   From \eqref{cg9}, we find that 
   \begin{align}
    \psi_j(\bt,\a)\leq \cPhi_0^{j-1}+\cPhi_{j-1}^j+\cPhi_j^k+F(\bt).
   \end{align}
   The lemma now follows readily from the fact that $\cPhi_0^{j-1}+\cPhi_{j-1}^j+\cPhi_j^k\leq \lg \fkm^*,\fkw^* \rg$.
  \end{proof}
  
  Now, from \eqref{cg3}, \eqref{maxSs}, Lemma \ref{Lempsij}, and the  Borel-Cantelli Lemma, we get that
  \begin{align}
   \limsup_{N\uparrow \infty}\frac{1}{N}\log M_{(N)} \leq \lg \fkm^*,\fkw^* \rg+F(\bt)+2\frac{k}{L}\bs \bt \quad \pas
  \end{align}
  Replacing this in \eqref{cg21}, and since $L$ is arbitrary, 
  Proposition \ref{PropfkM1} follows.

 
\section{Proof of Proposition \ref{PropfkM2}}\label{Sec-PropfkM2}
Recall~(\ref{x2n}). 
Let us start by describing briefly the strategy we use to prove Proposition \ref{PropfkM2}. In Proposition \ref{SetG}, we have showed that, for each pair of vertices $(\e,\u)$ such that $\dH_1(\e_1,\u_1)<\eps N_1$, the path connecting them in $\G_N$ has, with $\p$-probability $1$, the form $\g_{\e\u}=\g_{\e\o}\cup\g_{\o\u}$ for all large enough $N$, where the vertex $\o$, which we will refer to here as {\it the intermediate point} of the path $\g_{\e\u}$, is such that $\dH_1(\e_1,\o_1)\geq \eps N_1$ and $\eps N_1 \leq \dH_1(\o_1,\u_1)=\dH(\o,\u)\leq 2\eps N_1$. 
Keeping this in mind, since the summation in the right-hand side of \eqref{x2n} is over a set of self-avoiding paths $\g_{\e\u}$ that go through the edge $e$, we have that either $e\in \g_{\e\o}$, or $e\in\g_{\o\u}$. So, our plan is to proceed with the estimation of $\fkM_N^2$ by considering these two cases separately.

Recall 
\eqref{SetSgNeu}. Using the above arguments, we get that with $\p$-probability $1$,
\begin{align}
\fkM_N^{(2)}\leq \fkN_N'+\fkN_N'' 
\end{align}
for all large enough $N$ where
\begin{align}\label{cg220}
 \fkN_N'=\max_{e\in\Gd}\left\{\sum_{\g_{\e\u}\ni e} \exp\left(-\bt[\h(\e)+\h(\u)]\right)\1\{\exists \o\in \Sg_N^{\e\u};\g_{\e\u}=\g_{\e\o} \cup \g_{\o\u} \text{ and } \g_{\e\o}\ni e\} \right\}
\end{align}
and
\begin{align}\label{cg221}
 \fkN_N''=\max_{e\in\Gd}\left\{\sum_{\g_{\e\u}\ni e} \exp\left(-\bt[\h(\e)+\h(\u)]\right)\1\{\exists \o\in \Sg_N^{\e\u};\g_{\e\u}=\g_{\e\o} \cup \g_{\o\u} \text{ and } \g_{\o\u}\ni e\}\right\}.
\end{align}

Notice that, by our construction, $\g_{\e\o}$ and $\g_{\o\u}$ have no edge in common.

Let us first estimate the term $\fkN_N''$. To do this, it is enough to notice that for a given edge $e=(\s,\t)$ the sum in the right-hand side of \eqref{cg221} is over a set of paths connecting pairs of vertices $(\e,\u)$ such that $\e$ is in a hypercube of dimension at most $N$ around $\s$ and $\u$ is in a hypercube of dimension at most $2\eps N_1$ around $\t$. Using this, it follows that
\begin{align}\label{cg23}
 \fkN_N''\leq 4^{\eps N_1} \exp\left(-\h(\bsg)\right)Z_N
\end{align}
hence, by Theorem 1.5(iii) in \cite{Bovier1} and \eqref{FrEn},
\begin{align}\label{cg24}
 \limsup_{N\uparrow\infty}\frac{1}{N}\log \fkN_N'' \leq F(\bt)+\lg \fkm^*,\fkw^*\rg \quad \pas
\end{align}
since $0<\eps<\nicefrac{1}{2}$ is arbitrary.

To estimate the term $\fkN_N^1$, we use basically the same argument that we have applied to prove Proposition \ref{PropfkM1}. Arguing as we did to get \eqref{cg21}, we can write
\begin{align}
 \fkN_N'\leq N \max_{1\leq i\leq N} \fkN_N'(i)
\end{align}
where $\fkN_N'(i)$ is as in \eqref{cg220} but with the paths $\g_{\e\o}$ in $\G^i$. Again, it is sufficient to consider the variable $\fkN_N'(1)$. Now, using the fact that the set $\{(\e,\o)\in\Sg_N\times \Sg_N^{\e,\u}: \g_{\e\u}^1\ni e\}$ is equal to
\begin{align}
 \left(\bigcup_{\e\in\{-1,+1\}^{i-1}} \{(\e,\s^i,\ldots,\s^N)\} \right)\times \left(\bigcup_{\o\in\{-1,+1\}^{N-i}}\{(\t^1,\ldots,\t^i,\o)\}\right)
\end{align}
for a given edge $e=(\s,\t)$, with respective $i\in\{1,\ldots,N\}$; using the same notation used in \eqref{cg3}, 
we readily get 
\begin{align}
 \fkN_N'(1)\leq 4^{\eps N_1}\max_{1\leq i\leq N} &\max_{\s\in\Sg_N} S_{i-1}^{(1)}(\s^i,\s^{>i})S_{N-i}^{(1)}(\s^{<i},-\s^i)
\end{align}
where the power of 4 error factor arises due to condition $\dH_1(\o_1,\u_1)=\dH(\o,\u)\leq 2\eps N_1$. Arguing now as at the end of Section \ref{Sec-PropfkM1}, since $0<\eps<\nicefrac{1}{2}$ is arbitrary, we conclude that
\begin{align}
 \limsup_{N\uparrow\infty}\frac{1}{N}\log \fkN_N' \leq F(\bt)+\lg \fkm^*,\fkw^*\rg \quad \pas
\end{align}
Hence, the claim of Proposition \ref{PropfkM2} holds.


\appendix
\section{Appendix}\label{AP1}
\begin{lem}\label{L5}
Let $\fkw^*=(\fkw_1^*,\ldots,\fkw_k^*)$ be the point of $\R^k$ such that
\begin{align}
\fkw_j^*=\bt_{l}\sqrt{a_j},\text{ if }j\in\{J_{l-1}^*+1,\ldots,J_{l}^*\} \text{ for some }l=1,\ldots,l_k,
\end{align}
Then, $\fkw^*\in\Psi_k$ and 
\begin{align}
 \max_{x\in \Psi_k} \lg\fkm^*,x\rg=\lg\fkm^*,\fkw^*\rg. 
\end{align}
\end{lem}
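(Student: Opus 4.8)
The plan is to prove the two assertions separately, noting first that since $\fkm^*=\fkm^*(\bt)=\bt\fka$ with $\bt>0$, both sides of the claimed identity are homogeneous of degree one in $\bt$; hence it suffices to establish $\fkw^*\in\Psi_k$ together with the maximality $\lg\fka,x\rg\leq\lg\fka,\fkw^*\rg$ for every $x\in\Psi_k$. Throughout, for $1\leq l\leq l_k$ I write $A_l=\sum_{i=J_{l-1}^*+1}^{J_l^*}a_i$ and $Q_l=\sum_{i=J_{l-1}^*+1}^{J_l^*}p_i$, and record from $\bt_l=B(J_{l-1}^*+1,J_l^*)$ the identity $\bt_l^2A_l=\bs^2Q_l$, as well as $\sum_{m=1}^lQ_m=P_{J_l^*}$.

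For the membership, fix $l$ and $j$ with $J_{l-1}^*<j\leq J_l^*$ and compute, using $\bt_m^2A_m=\bs^2Q_m$ for the complete blocks $m<l$,
$$\sum_{i=1}^j(\fkw^*_i)^2=\bs^2P_{J_{l-1}^*}+\bt_l^2\sum_{i=J_{l-1}^*+1}^ja_i.$$
By the defining minimality of $J_l^*$ one has $\bt_l=B(J_{l-1}^*+1,J_l^*)\leq B(J_{l-1}^*+1,j)$, that is $\bt_l^2\sum_{i=J_{l-1}^*+1}^ja_i\leq\bs^2\sum_{i=J_{l-1}^*+1}^jp_i=\bs^2(P_j-P_{J_{l-1}^*})$; substituting gives $\sum_{i=1}^j(\fkw^*_i)^2\leq\bs^2P_j$, so $\fkw^*\in\Psi_k$. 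Taking $j=k=J_{l_k}^*$ also records the equality $\|\fkw^*\|^2=\bs^2$, which together with \eqref{Fk} incidentally yields $F(\bt)=\lg\fkm^*,\fkw^*\rg$ for $\bt\geq\bt_{l_k}$.

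For the maximality, which is the heart of the matter, I would use the block structure twice. Given $x\in\Psi_k$, set $S_l=\sum_{i=J_{l-1}^*+1}^{J_l^*}x_i^2$; Cauchy--Schwarz inside each block, followed by $\sqrt{A_l}=\bs\sqrt{Q_l}\,\bt_l^{-1}$, gives
$$\lg\fka,x\rg=\sum_{l=1}^{l_k}\sum_{i=J_{l-1}^*+1}^{J_l^*}\sqrt{a_i}\,x_i\leq\sum_{l=1}^{l_k}\sqrt{A_lS_l}=\bs\sum_{l=1}^{l_k}\bt_l^{-1}\sqrt{Q_lS_l},$$
while $\lg\fka,\fkw^*\rg=\sum_l\bt_lA_l=\bs^2\sum_l\bt_l^{-1}Q_l$ by the same identity $\bt_l^2A_l=\bs^2Q_l$; so the claim reduces to $\sum_l\bt_l^{-1}\sqrt{Q_lS_l}\leq\bs\sum_l\bt_l^{-1}Q_l$. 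Applying AM--GM, $\sqrt{Q_lS_l}\leq\tfrac12(\bs^{-1}S_l+\bs Q_l)$, reduces this further to $\sum_l\bt_l^{-1}S_l\leq\bs^2\sum_l\bt_l^{-1}Q_l$, that is $\sum_l\bt_l^{-1}(\bs^2Q_l-S_l)\geq0$. Now $x\in\Psi_k$ forces the partial sums $R_l:=\sum_{m=1}^l(\bs^2Q_m-S_m)=\bs^2P_{J_l^*}-\sum_{i\leq J_l^*}x_i^2$ to be nonnegative for every $l$ (with $R_0=0$), while $(\bt_l^{-1})_l$ is strictly decreasing because $(\bt_l)_l$ is strictly increasing; Abel summation then yields $\sum_l\bt_l^{-1}(R_l-R_{l-1})=\bt_{l_k}^{-1}R_{l_k}+\sum_{l=1}^{l_k-1}(\bt_l^{-1}-\bt_{l+1}^{-1})R_l\geq0$, which closes the proof.

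The delicate point is precisely this last step: one has to discard all but the block-endpoint constraints $\sum_{i\leq J_l^*}x_i^2\leq\bs^2P_{J_l^*}$ and arrange the two Cauchy--Schwarz applications so that the surviving weights $\bt_l^{-1}$ are monotone in the direction that makes the Abel-summation estimate against $R_l\geq0$ go through; the strict monotonicity of $(\bt_l)_l$ coming from the construction of the $J_l^*$ is exactly what is needed. An alternative, should one prefer to lean on the machinery already in the appendix, is to use that for $\bt\geq\bt_{l_k}$ the point $\fkw^*$ coincides with the closest point $\fkw$ of $\Psi_k$ to $\fkm^*$, invoke the projection characterization of Theorem~\ref{PBrezis} to get $\lg\fkm^*-\fkw^*,x-\fkw^*\rg\leq0$, and combine it with $\lg\fkw^*,x\rg\leq\bs^2=\|\fkw^*\|^2$ (itself a consequence of the same two Cauchy--Schwarz steps together with $\sum_lQ_l=P_k=1$ and $\sum_lS_l=\sum_{i\leq k}x_i^2\leq\bs^2$) to obtain $\lg\fkm^*,x\rg\leq\lg\fkm^*,\fkw^*\rg$ directly; I would nonetheless present the self-contained version above as the main line of argument.
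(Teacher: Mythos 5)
Your argument is correct, and while it uses the same two tools as the paper (Cauchy--Schwarz and an Abel-summation step against the decreasing weights $\bt_l^{-1}$), it arranges them differently. The paper applies Cauchy--Schwarz once at each block endpoint $J_l^*$, directly to the partial projections $\Pi_0^{J_l^*}\fkw^*$ and $\Pi_0^{J_l^*}x$, which gives at a stroke that the partial sums $\sum_{m\leq l}y_m=\lg\Pi_0^{J_l^*}\fkw^*,\Pi_0^{J_l^*}(\fkw^*-x)\rg$ are nonnegative, with $y_m=\sum_{j}\bt_m\sqrt{a_j}(\bt_m\sqrt{a_j}-x_j)$; it then invokes Lemma~A of~\cite{Dorlas} (which is precisely the Abel-summation trick you carry out by hand) to conclude $\lg\fkm^*,\fkw^*-x\rg\geq 0$. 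You instead apply Cauchy--Schwarz \emph{inside} each block, which produces the cross-terms $\sqrt{Q_l S_l}$ and forces an additional AM--GM step before the sum can be linearized and Abel-summed; the nonnegative partial sums you then use, $R_l=\bs^2 P_{J_l^*}-\sum_{i\leq J_l^*}x_i^2$, are the raw constraints defining $\Psi_k$, whereas the paper's partial sums already package the constraint together with one Cauchy--Schwarz. Both routes are valid; the paper's is marginally shorter since it bypasses AM--GM, while yours more cleanly separates the geometric input (the $\Psi_k$ constraints give $R_l\geq 0$) from the algebraic manipulations. Your treatment of the membership $\fkw^*\in\Psi_k$, spelling out the role of the minimality condition defining $J_l^*$ and recording $\|\fkw^*\|^2=\bs^2$, is correct and usefully elaborates what the paper dismisses as immediate.
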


\begin{proof}
The proof of Lemma \ref{AP1} is inspired by a one in~\cite{Dorlas} and has as key tool the Cauchy-Schwarz inequality. The fact that $\fkw^*\in\Psi_k$ is an immediate consequence of definition \eqref{defb} and assumptions $\sum_{j=1}^k a_j=\sum_{j=1}^k p_j=1$. Now, let $x\in\Psi_k$. By Cauchy-Schwarz inequality, for all $l\in\{1,\ldots,l_k\}$, we have
\begin{align}
 \lg \Pi_0^{J_{l}^*}x,\Pi_0^{J_{l}^*}\fkw^*\rg \leq \left\|\Pi_0^{J_{l}^*}x\right\|\cdot\left\|\Pi_0^{J_{l}^*}\fkw^*\right\| \leq \left\|\Pi_0^{J_{l}^*}\fkw^*\right\|\cdot \sqrt{P_{J_{l}^*}}.
\end{align}
Since $\left\|\Pi_0^{J_{l}^*}\fkw^*\right\|^2=P_{J_{l}^*}$, it follows that
\begin{align}
\lg \Pi_0^{J_{l}^*}x,\Pi_0^{J_{l}^*}\fkw^*\rg \leq \left\|\Pi_0^{J_{l}^*}\fkw^*\right\|^2 = \lg \Pi_0^{J_{l}^*}\fkw^*,\Pi_0^{J_{l}^*}\fkw^* \rg.
\end{align}
Hence,
\begin{align}
 0\leq \lg \Pi_0^{J_{l}^*}\fkw^*-\Pi_0^{J_{l}^*}x,\Pi_0^{J_{l}^*}\fkw^*\rg=\sum_{i=1}^{l}\sum_{j=J_{i-1}^*+1}^{J_i^*}\bt_i\sqrt{a_j}(\bt_i\sqrt{a_j}-x_j).
\end{align}

Set $y_{l}=\sum_{j=J_{l-1}^*+1}^{J_{l}^*}\bt_{l}\sqrt{a_j}(\bt_{l}\sqrt{a_j}-x_j)$, $l=1,\ldots,l_k$, and consider the numbers $\bt\bt_1^{-1}>\cdots >\bt\bt_{l_k}^{-1}>0$. From what we have just seen, the sequences $(y_{l})_{l=1}^{l_k}$ and $(\bt\bt_{l}^{-1})_{l=1}^{l_k}$ satisfy the conditions of Lemma A in~\cite{Dorlas} so that we readily get
\begin{align}\label{iL5}
0\leq \sum_{l=1}^{l_k}\bt\bt_{l}^{-1}\sum_{j=J_{l-1}^*+1}^{J_{l}^*}\bt_{l}\sqrt{a_j}(\bt_{l}\sqrt{a_j}-x_j)=\lg \fkm^*,\fkw^* \rg - \lg \fkm^*,x \rg.
\end{align}
This concludes the proof of Lemma \ref{L5}.
\end{proof}

\begin{teo}[Projection onto a closed convex set]\label{PBrezis}
Let $K\subset H$ be a nonempty closed convex set. Then for every $f\in H$ there exists a unique element $u\in K$ such that
\begin{align}
|f-u|=\min_{v\in K}|f-v|=\d(f,K).
\end{align}
Moreover, $u$ is characterized by the property
\begin{align}
u\in K \text{ and } \lg f-u,v-u \rg \leq 0,\quad \forall v\in K.
\end{align}
\end{teo}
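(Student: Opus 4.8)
The plan is to run the classical Hilbert space argument: existence of the minimiser via a parallelogram-identity / completeness argument, then the variational characterisation via a convexity-based perturbation, and finally uniqueness as a byproduct of the characterisation. First I would set $d=\d(f,K)=\inf_{v\in K}|f-v|$ and choose a minimising sequence $(v_n)\subset K$ with $|f-v_n|\to d$. Applying the parallelogram identity to $f-v_n$ and $f-v_m$ gives
\begin{align*}
 |v_n-v_m|^2 = 2|f-v_n|^2+2|f-v_m|^2-4\left|f-\tfrac{v_n+v_m}{2}\right|^2;
\end{align*}
since $K$ is convex, $\tfrac{v_n+v_m}{2}\in K$, so the last term is at least $4d^2$, whence $|v_n-v_m|^2\le 2|f-v_n|^2+2|f-v_m|^2-4d^2\to 0$. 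Thus $(v_n)$ is Cauchy; by completeness of $H$ it converges to some $u$, which lies in $K$ since $K$ is closed, and continuity of the norm gives $|f-u|=d$. This settles existence.

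For the characterisation I would prove both implications. If $u\in K$ satisfies $\lg f-u,v-u\rg\le 0$ for all $v\in K$, then expanding $|f-v|^2=|f-u|^2+2\lg f-u,u-v\rg+|u-v|^2$ and using $\lg f-u,u-v\rg=-\lg f-u,v-u\rg\ge 0$ yields $|f-v|^2\ge |f-u|^2+|u-v|^2\ge|f-u|^2$, so $u$ is a minimiser; moreover this same inequality forces any other minimiser $v$ to satisfy $|u-v|^2\le 0$, giving uniqueness. Conversely, if $u\in K$ is a minimiser, then for any $v\in K$ and $t\in(0,1]$ the point $u+t(v-u)=(1-t)u+tv$ lies in $K$ by convexity, so
\begin{align*}
 |f-u|^2\le |f-u-t(v-u)|^2=|f-u|^2-2t\lg f-u,v-u\rg+t^2|v-u|^2,
\end{align*}
which gives $\lg f-u,v-u\rg\le\tfrac{t}{2}|v-u|^2$; letting $t\downarrow 0$ produces the asserted inequality.

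Combining the pieces: the minimiser produced in the existence step satisfies the variational inequality by the ``minimiser $\Rightarrow$ inequality'' implication, and the ``inequality $\Rightarrow$ unique minimiser'' implication shows it is the only minimiser, which simultaneously delivers the uniqueness claim and the characterisation. I do not anticipate a genuine obstacle; the one idea that carries the argument is coupling the parallelogram identity with convexity of $K$ to extract the Cauchy property of the minimising sequence, and the rest is routine expansion of inner products together with a one-sided limit. (Uniqueness could alternatively be obtained directly by applying the parallelogram identity to two minimisers, but deducing it from the characterisation is cleaner and costs nothing extra.)
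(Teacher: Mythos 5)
Your proof is correct and is precisely the classical Hilbert-space argument; the paper itself offers no proof but simply cites Brezis (Th\'eor\`eme V.2), whose proof is exactly the one you reproduce: parallelogram identity plus convexity of $K$ to get the Cauchy property of a minimising sequence, then the first-order perturbation $u+t(v-u)$ to obtain the variational inequality, with uniqueness falling out of either computation.
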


See~\cite{brezis1983analyse}, Theorem V.2, p. 79.

\section*{Acknowledgements}
This work is part of the Ph.D.~thesis of the second author
at IME-USP and was supported in part by CNPq 140762/2016-7. 
We warmfully thank Pierre Picco for suggesting this problem 
and for innumerable discussions concerning it in many occasions.

%




\end{document}